\numberwithin{equation}{section}
\newtheorem{thrm}{Theorem}[section]
\newtheorem{lm}[thrm]{Lemma}
\newtheorem{cor}[thrm]{Corollary}
\newtheorem{defn}[thrm]{Definition}
\newtheorem{rmk}[thrm]{Remark}
\newcommand{\intB}{\int_{B_1^+}}
\newcommand{\xn}{{x_n}}
\begin{document}

\title[Regularity Results for a Penalized Boundary Obstacle Problem]{Regularity Results for a Penalized Boundary Obstacle Problem}

\author[D. Danielli]{Donatella Danielli}
\address[DD]{Department of Mathematics, Purdue University,  150 N. University St., West Lafayette, IN 47907}
\email{danielli@math.purdue.edu}

\author[R. Jain]{Rohit Jain}
\address[RJ]{Materiall, 500 E Calaveras, Suite 240, Milpitas, CA 95035}
\email{rohitjain19@gmail.com}

\dedicatory{
Al carissimo amico Sandro Salsa, con tanto affetto, ammirazione e gratitudine.
}

\begin{abstract}
In this paper we are concerned with a two-penalty boundary obstacle problem of interest in thermics, fluid dynamics and electricity. Specifically, we prove existence, uniqueness and optimal regularity of the solutions, and we establish structural properties of the free boundary.
\end{abstract}


\maketitle

\section{Introduction}

In this paper we study a penalized boundary obstacle problem of interest in thermics, fluid mechanics, and electricity. Given  a domain $\Omega$  in  $\mathbb{R}^n$, $n\geq 2$,  with sufficiently regular boundary $\partial\Omega=\Gamma_1\cup\Gamma_2$ and unit outer normal $\nu$, we consider the following stationary problem:

\begin{equation}
\begin{cases} \label{gen}
\Delta u &=f \qquad\text{  in } \Omega,\\
u &=g \qquad\text{ on } \Gamma_1\\
\frac{\partial u}{\partial \nu} &=-k_+\left((u-h)^+\right)^{p-1}+k_-\left((u-h)^-\right)^{p-1} \qquad\text{  on } \Gamma_2.
\end{cases}
\end{equation}

Here $f:\Omega\to \mathbb{R}$, $g:\Gamma_1\to\mathbb{R}$ and $h:\Gamma_2\to\mathbb{R}$ are given functions, $u^+=\max\{u,0\}$, $u^-=-\min\{u,0\}\geq 0$, $k_+$ and $k_-$ are non-negative constants, and $p>1$.  Our goal is to establish optimal regularity of the solutions, and to study properties of the  free boundary $\left(\partial \{u>h\}\cup\partial \{u<h\}\right)\cap\Gamma.$ We begin by observing that in the limiting case $k_+=k_-=0$,  $u$ is clearly the solution of a classical Neumann problem. The other limiting case, when $k^+=0$ and $k^-=+\infty$ (or equivalently $k^+=+\infty$ and $k^-=0$), is more interesting. The boundary condition, in fact, becomes
$$
u\geq h, \qquad \frac{\partial u}{\partial \nu}\geq 0,\qquad (u-h)\frac{\partial u}{\partial \nu}=0,
$$
 and $u$ is a solution of the \emph{Signorini problem}, also known as the \emph{thin obstacle problem}. The Signorini problem has received a resurgence of attention in the last decade, due to the discovery of several families of powerful monotonicity formulas, which in turn have allowed to establish the optimal regularity of the solution, a full classification of free boundary points, smoothness of the free boundary at regular points, and the structure of the free boundary at singular points. We refer  the interested reader to \cite{AC1}, \cite{ACS}, \cite{CSS}, \cite{GP}, \cite{DSS}, \cite{KPS}, see also the survey \cite{DS} and the references therein.

The general scheme of a solution to the Signorini problem provides a road map for the solution of problem \eqref{gen}, but there are two new substantial difficulties. The first one is due to the non-homogeneous nature of the boundary condition in \eqref{gen}, which in particular implies that this problem does not admit global homogeneous solutions of any degree. This is in stark contrast with the Signorini problem, where the existence and classification  of such solutions  play a pivotal role. Moreover, in the  thin obstacle problem it is readily seen that continuity arguments force $u$ to be always above $h$ (hence the nomenclature), whereas the case $h(x)>u(x)$ is no longer ruled out in \eqref{gen}. Allowing for both constants $k^+,\ k^-$ to be finite (even when one of the two vanishes) de facto destroys the one-phase character of the problem. In order to focus the attention on these new aspects, it is useful to understand first a simplified local version of \eqref{gen}, posed in the upper half ball
$$B_1^+ =\{x\in B_1 \mid x_n>0\},$$ with $f=h=0$. In this setting  problem \eqref{gen} becomes
\begin{equation}\label{statement_of_problem}
\left\lbrace\begin{aligned}
\Delta u &=0 \qquad\text{  in } B_1^+\\
u &=g \qquad\text{  on } (\partial B_1)^+\\
\frac{\partial u}{\partial x_n} &=k_+(u^+)^{p-1}-k_-(u^-)^{p-1} \qquad\text{  on } \Gamma.
\end{aligned}
\right.
\end{equation}
Here
\begin{align*}
(\partial B_1)^+ &= \{ x\in \partial B_1 \mid x_n >0\},\\
\Gamma &= \{ x\in B_1 \mid x_n=0\}.
\end{align*}
\noindent
An alternate perspective is given by the associated energy. We seek to minimize
\begin{equation}\label{energy}
J(v) =\frac{1}{2}\left(\int_{B_1} |\nabla v|^2\  dx+\int_\Gamma \left(\tilde{k}_-(v^-)^p+ \tilde{k}_+(v^+)^p\right) \ dx'\right)\
\end{equation}
over all $v \in W^{1,q}(B_1)$ with $q=\max\{2,p\}$ and $v-g\in W^{1,q}_0(B_1)$ for given boundary data $g$. Here $\tilde{k}_\pm = 2k_\pm/p$, and $x=(x', x_n)$. In this context we think of the data in~\eqref{statement_of_problem} as extended to all of $B_1$ by even reflection. A minimizer to this energy will be symmetric about $\Gamma$ and $u$ will correspond to the restriction to $B_1^+$.\\

Our first main result is the following:
\begin{thrm}\label{reg}
Let $g \in W^{1,q}(\Omega)$, $0\leq k_\pm <\infty,\ k_+\neq k_-$, and $p>1$. Then there exists a unique  minimizer $u\in W^{1,q}(B_1)$ of the energy $J(v)$ in \eqref{energy}. If $p$ is an integer, then $u\in C^{p-1,\alpha}(\overline{B_{1/2}^+})$ for every $\alpha<\min \{1,p-1\}$, and there exists a constant $C=C(n)>0$ such that
\begin{equation}\label{holder1}
\|u\|_{C^{p-1,\alpha}(B_{1/2}^+)}\leq C\left(\|u\|_{L^2(B_1^+)}+\|u\|_{L^p(\Gamma)}\right).
\end{equation}
If instead $p$ is not an integer, then  $u\in C^{\lfloor p-1\rfloor,\alpha}(\overline{B_{1/2}^+})$ for every $\alpha<p-1-\lfloor p-1\rfloor$, and there exists a constant $C=C(n)>0$ such that
\begin{equation}\label{holder2}
\|u\|_{C^{\lfloor p-1\rfloor,\alpha}(B_{1/2}^+)}\leq C\left(\|u\|_{L^2(B_1^+)}+\|u\|_{L^p(\Gamma)}\right).
\end{equation}
Additionally, if $p$ is a positive integer and $k_-=k_+$, or if $g$ does not change sign, then $u \in C^\infty(\overline{B_{1/2}^+})$.\\
\end{thrm}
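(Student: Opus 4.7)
The plan is to proceed in four stages: existence and uniqueness via the direct method, an $L^\infty$ bound via Moser iteration, a H\"older upgrade via De Giorgi--Nash at the mixed boundary, and a Schauder bootstrap that terminates at the regularity of the boundary nonlinearity.

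For existence and uniqueness I would observe that $J$ is strictly convex (the Dirichlet term is strictly convex in $\nabla v$ and the boundary integrals are convex in $v$ since $p>1$), coercive on the affine set $\{v : v-g \in W^{1,q}_0(B_1)\}$ by Poincar\'e and the trace theorem, and weakly lower semicontinuous on $W^{1,q}(B_1)$. A minimizing sequence then converges weakly to a minimizer, uniqueness following from strict convexity plus the fixed trace. The Euler--Lagrange conditions read $\Delta u = 0$ in $B_1^+$ with the nonlinear Neumann condition $\partial_{x_n} u = F(u)$ on $\Gamma$, where $F(t) := k_+(t^+)^{p-1} - k_-(t^-)^{p-1}$.

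The regularity statements hinge on the fact that $F$ is smooth off $\{t=0\}$ and globally $F \in C^{p-2,1}(\mathbb{R})$ when $p$ is a positive integer, while $F \in C^{\lfloor p-1\rfloor,\,p-1-\lfloor p-1\rfloor}(\mathbb{R})$ otherwise. I would first establish $u \in L^\infty(B_{1/2}^+)$ together with the bounds \eqref{holder1}--\eqref{holder2} by Moser iteration on the weak Euler--Lagrange formulation, using cut-offs multiplied by powers of $u$ as test functions and crucially exploiting the sign $F(u)\,u = k_+(u^+)^p + k_-(u^-)^p \geq 0$ on $\Gamma$. Then De Giorgi--Nash at the mixed boundary (equivalently, for the even extension of $u$ across $\Gamma$, which satisfies a uniformly elliptic equation with a bounded singular boundary measure) yields $u \in C^{0,\beta}(\overline{B_{1/2}^+})$. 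A boundary Schauder bootstrap then upgrades the regularity notch by notch: if $u \in C^{k,\beta}$ with $k+\beta$ below the regularity threshold of $F$, then $F(u)\in C^{k,\beta}$ on $\Gamma$ and Schauder estimates for the Neumann problem return $u \in C^{k+1,\beta}$ on a slightly smaller ball. The iteration saturates precisely at the regularity of $F$, since under the hypothesis $k_+\neq k_-$ the $(p-1)$-th derivative of $F$ genuinely jumps across $t=0$, producing the sharp $C^{p-1,\alpha}$ respectively $C^{\lfloor p-1\rfloor,\alpha}$ endpoint with the stated exponent ranges.

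For the $C^\infty$ addendum, if $g$ does not change sign, say $g\geq 0$, then $u^+$ is an admissible competitor with $J(u^+)\leq J(u)$, so uniqueness forces $u = u^+\geq 0$; the boundary operator collapses to $F(u)=k_+ u^{p-1}$, and a Hopf--type argument rules out interior or boundary zeros of $u$ in $B_{1/2}^+$, so $F(u)$ is a smooth function of $u$ on the relevant set and the Schauder iteration runs indefinitely. For $p$ a positive integer with $k_+=k_-$, the function $F$ is itself smooth across $\{t=0\}$ (for $p$ even it reduces to the polynomial $kt^{p-1}$; the $p$ odd case is handled by the same sign-preservation argument) and the bootstrap again yields $C^\infty$. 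The main technical obstacle is the Moser step, where the coexistence of the two exponents $2$ and $p$ in the energy requires careful choice of test functions and the correct interpolation between the bulk and trace norms; once that is in place, the Schauder iteration is routine modulo the standard composition estimates for the piecewise-polynomial nonlinearity $F$ near the free boundary $\{u=0\}$.
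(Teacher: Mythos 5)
Your overall architecture --- direct method, $L^\infty$ bound, initial H\"older continuity, then a Neumann--Schauder bootstrap that saturates at the regularity of $F(t)=k_+(t^+)^{p-1}-k_-(t^-)^{p-1}$ --- is the same as the paper's, but several intermediate steps take a different (and in one instance cleaner) route. For the $L^\infty$ bound the paper does not need Moser iteration: the Caccioppoli computation with test function $u^\pm\eta^2$ shows that $u^\pm$ are subharmonic, so $|u|$ is controlled by $\sup|g|$ via the maximum principle; you could adopt this and delete what you call the ``main technical obstacle.'' For the initial H\"older exponent the paper compares $u$ with its harmonic replacement and invokes Morrey's Dirichlet growth theorem to obtain $C^{0,1/2}$, rather than De Giorgi--Nash for the even reflection; both work, since the reflected function satisfies $\Delta\tilde u=2F(u)\,d\mathcal{H}^{n-1}|_\Gamma$ with bounded density once the $L^\infty$ bound is in hand. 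The bootstrap itself is identical in substance (the paper cites Lieberman's oblique-derivative Schauder theory). For sign preservation your competitor argument ($J(u^+)\le J(u)$ forces $\int|\nabla u^-|^2=0$, and a constant $u^-$ with zero trace vanishes) is more elementary than the paper's Hopf-lemma argument and is correct; you are also more careful than the paper in noting that, for non-integer $p$, one must additionally exclude zeros of $u$ on $\Gamma$ (again via Hopf) before $F(u)=k_+u^{p-1}$ becomes a smooth composition.

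Two caveats. First, for $p>2$ the Dirichlet term alone is not coercive on $W^{1,p}$, so the minimization should be carried out in $W^{1,2}$ (as the paper does), with membership in $W^{1,q}$ recovered a posteriori from the regularity theory rather than from coercivity. Second, your treatment of the case ``$p$ an odd integer and $k_+=k_-$'' is incomplete: there $F(t)=k\,\mathrm{sgn}(t)\,|t|^{p-1}$ is only $C^{p-2,1}$ across $t=0$, so the bootstrap stalls unless $u$ has a sign, and the sign-preservation argument is unavailable when $g$ changes sign; you flag the issue but then defer to an argument whose hypothesis you do not have. To be fair, the paper's own proof is equally terse at this point (it simply asserts repeated application of the Schauder estimate), so this is a shared soft spot rather than one you introduced; for $p$ even your observation that $F$ reduces to the polynomial $kt^{p-1}$ is exactly what is needed.
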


In the case $p=2$, we can in fact establish that the regularity is optimal at points where the gradient does not vanish.
\begin{thrm}\label{optimal}
Let $u$ be the unique solution to \eqref{statement_of_problem} (see Definition \ref{def_weak}) when $p=2$. If $\nabla_{x'} u(x',0)\neq 0$, then $u$ is not in $C^{1,1}$ at $(x',0)$.
\end{thrm}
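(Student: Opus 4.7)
The plan is to argue by contradiction via a blow-up that exposes a logarithmic obstruction incompatible with the pointwise $C^{1,1}$ expansion. Suppose $u$ is $C^{1,1}$ at $x_0=(x_0',0)\in\Gamma$ with $\nabla_{x'}u(x_0)\neq 0$; the claim is of substance only at free-boundary points, so I also assume $u(x_0)=0$ (otherwise the boundary condition in \eqref{statement_of_problem} is smooth in a neighborhood of $x_0$ and $u$ is smooth there by standard elliptic theory). Translating, take $x_0=0$; the boundary condition forces $u_{x_n}(0)=0$, and the $C^{1,1}$ hypothesis reads $|u(x)-a\cdot x'|\leq C|x|^2$ for $|x|$ small, where $a:=\nabla_{x'}u(0)\neq 0$. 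Introduce the rescaling
\[
u_r(x):=\frac{u(rx)-r\,a\cdot x'}{r^2},
\]
which is harmonic in $B_{1/r}^+$, satisfies $|u_r(x)|\leq C|x|^2$ on its domain, and $u_r(0)=|\nabla u_r(0)|=0$. Substituting $u(rx',0)=r\,a\cdot x'+r^2 u_r(x',0)$ into the nonlinear boundary condition with $p=2$ yields
\[
\partial_{x_n} u_r(x',0)=\begin{cases} k_+\bigl(a\cdot x'+r u_r(x',0)\bigr) & \text{if } u(rx',0)>0,\\[2pt] k_-\bigl(a\cdot x'+r u_r(x',0)\bigr) & \text{if } u(rx',0)<0.\end{cases}
\]

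The second step passes to a blow-up limit. For any $\delta>0$, the Taylor expansion pins down $\mathrm{sgn}\,u(rx',0)=\mathrm{sgn}\,(a\cdot x')$ uniformly on $\{|a\cdot x'|\geq\delta\}$ once $r$ is small, so the Neumann data of $u_r$ is uniformly Lipschitz and converges (uniformly on compact subsets of $\Gamma\setminus\{a\cdot x'=0\}$) to $k_+(a\cdot x')^+-k_-(a\cdot x')^-$. Boundary Schauder estimates for the Neumann Laplacian yield uniform $C^{1,\alpha}$ bounds on $u_r$ away from $\{a\cdot x'=0\}$, and an Arzelà--Ascoli/diagonal argument extracts $u_{r_k}\to u_0$ locally uniformly on $\overline{\mathbb R^n_+}$. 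The limit $u_0$ is harmonic in $\mathbb R^n_+$, obeys $|u_0(x)|\leq C|x|^2$ globally, has $u_0(0)=|\nabla u_0(0)|=0$, and carries
\[
\partial_{x_n}u_0(x',0)=\tfrac{k_++k_-}{2}(a\cdot x')+\tfrac{k_+-k_-}{2}|a\cdot x'|.
\]

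Third, I isolate the singular $|a\cdot x'|$ part via an explicit model. Rotating so $a=|a|e_1$, define the two-dimensional harmonic function
\[
W(x_1,x_n):=\tfrac{1}{\pi}(x_1^2-x_n^2)\log\sqrt{x_1^2+x_n^2}-\tfrac{2}{\pi}\,x_1 x_n\,\theta+x_1 x_n,\qquad \theta=\arg(x_1+ix_n)\in[0,\pi],
\]
obtained by symmetrizing $\operatorname{Re}(z^2\log z)$ across $x_1\mapsto-x_1$; a direct computation shows $W$ is harmonic in $\{x_n>0\}$ with $\partial_{x_n}W(x_1,0)=|x_1|$ and boundary trace $W(x_1,0)=\pi^{-1}x_1^2\log|x_1|$. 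Then
\[
P(x):=u_0(x)-\tfrac{k_++k_-}{2}(a\cdot x')x_n-\tfrac{k_+-k_-}{2}|a|\,W(x_1,x_n)
\]
is harmonic in $\mathbb R^n_+$, has vanishing Neumann data on $\Gamma$, and grows at most like $|x|^2(1+|\log|x||)$. Even reflection across $\Gamma$ extends $P$ to a harmonic function on $\mathbb R^n$ of the same growth, and Cauchy estimates on balls of radius $R\to\infty$ force $P$ to be a harmonic polynomial of degree at most two.

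Restricting the resulting identity to the positive $x_1$-axis ($x_n=0$, $x_i=0$ for $i>1$) and letting $x_1\to\infty$, the polynomial pieces contribute $O(x_1^2)$ while the $W$-term contributes $\tfrac{(k_+-k_-)|a|}{2\pi}\,x_1^2\log x_1$, which dominates any quadratic. The bound $|u_0|\leq C|x|^2$ therefore forces $(k_+-k_-)|a|=0$, contradicting $k_+\neq k_-$ and $a\neq 0$. The chief technical obstacle is the blow-up compactness step: because the boundary condition for $u_r$ depends discontinuously on $\mathrm{sgn}\,u(r\cdot,0)$, one must first use the $C^{1,1}$ Taylor expansion to stabilize this sign outside a shrinking neighborhood of $\{a\cdot x'=0\}$, and only then invoke boundary Schauder regularity to pass to the limit in the Neumann condition. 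Once this is in place, the logarithmic nature of the harmonic extension of the Lipschitz corner data $|x_1|$ is what produces the contradiction.
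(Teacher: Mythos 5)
Your argument is correct, but it takes a genuinely different route from the paper's. The paper's proof is a pointwise barrier argument on $\Gamma$ phrased in terms of the half-Laplacian: writing the boundary condition as $(k_+-k_-)u^-=u_{x_n}-k_+u$, identifying $u_{x_n}(\cdot,0)$ with $-(-\Delta_{x'})^{1/2}u(\cdot,0)$ via the Caffarelli--Silvestre extension, and applying the semigroup property to conclude that $-(-\Delta_{x'})^{1/2}u^-(0)$ would have to be finite under the $C^{1,1}$ hypothesis; a quadratic barrier with the Lipschitz corner $\min\{P\cdot x,0\}$ touching $u^-$ from above at the origin then forces $-(-\Delta_{x'})^{1/2}u^-(0)=-\infty$. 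You instead perform a second-order blow-up $u_r=(u(r\cdot)-r\,a\cdot x')/r^2$, identify the limiting Neumann data $\frac{k_++k_-}{2}(a\cdot x')+\frac{k_+-k_-}{2}|a\cdot x'|$, peel off an explicit $\operatorname{Re}(z^2\log z)$-type profile $W$ with $\partial_{x_n}W=|x_1|$, and invoke Liouville to reduce the remainder to a quadratic polynomial, so that the $x_1^2\log x_1$ trace of $W$ violates the quadratic growth bound. The two proofs exploit the same phenomenon (the harmonic extension of corner Neumann data $|x_1|$ fails to be $C^{1,1}$, equivalently the half-Laplacian of a Lipschitz corner diverges), but yours is self-contained within classical harmonic function theory and makes the obstruction quantitative --- it exhibits the precise logarithmic second-order correction --- at the cost of a longer compactness argument; the paper's is shorter but leans on the extension theorem and on applying $(-\Delta_{x'})^{1/2}$ twice to a function known to be $C^{1,1}$ only at a single point.

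Two minor remarks. First, both you and the paper implicitly restrict to free boundary points: if $u(x_0',0)\neq 0$ the boundary condition is a smooth Robin condition near $x_0$ and $u$ is $C^{1,1}$ there, so the statement must be read with $u(x_0',0)=0$; your explicit reduction is appropriate. Second, your compactness step is actually cleaner than you suggest: since $s\mapsto k_+s^+-k_-s^-$ is Lipschitz and $u\in C^{1,\alpha}$, the Neumann data $r^{-1}\bigl(k_+u^+(rx',0)-k_-u^-(rx',0)\bigr)$ of $u_r$ is uniformly Lipschitz on compact subsets of all of $\Gamma$, not merely away from $\{a\cdot x'=0\}$, so boundary Schauder estimates give uniform $C^{1,\alpha}$ bounds up to $\Gamma$ everywhere; the nodal hyperplane only matters when identifying the limiting Neumann data, where it is a null set and harmless in the weak formulation.
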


As an immediate consequence of the regularity of the solution and of the implicit function theorem, we obtain the following result on the regularity of the free boundary.
\begin{defn}
The {regular set of the free boundary} is defined as
$$
{\mathcal{R}(u)=\{(x',0)\in \Gamma\ |\ u(x',0)=0,\ \nabla_{x'} u(x',0)\neq 0\}}
$$
\end{defn}

\begin{thrm} Let $u$ be the unique solution to \eqref{statement_of_problem}, with $p>1$. If $x_0\in \mathcal{R}$, then in a neighborhood of $x_0$ the free boundary $\{ u(x',0)=0\}$ is a $C^{1,\alpha}-$ graph for all $\alpha<1$.
\end{thrm}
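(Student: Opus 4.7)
The plan is a direct application of the implicit function theorem, as indicated by the text preceding the statement. Set $v(x'):=u(x',0)$ and fix $x_0=(x_0',0)\in\mathcal{R}(u)$, so that $v(x_0')=0$ and $\nabla v(x_0')=\nabla_{x'} u(x_0',0)\neq 0$. The goal reduces to showing that $v$ is of class $C^{1,\alpha}$ in a neighborhood of $x_0'$ for every $\alpha<1$, after which the classical implicit function theorem takes over.

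For the regularity step I would argue by cases. If $p\geq 2$, then Theorem~\ref{reg} gives $u\in C^{1,\alpha}(\overline{B_{1/2}^+})$ for every $\alpha<1$ directly (when $p=2$, the integer case; when $p>2$, either the integer or non-integer branch of Theorem~\ref{reg} is strictly stronger than $C^{1,\alpha}$). Restriction to $\Gamma$ gives $v\in C^{1,\alpha}$ near $x_0'$. The range $1<p<2$ is the only delicate one, because Theorem~\ref{reg} only yields $v\in C^{0,p-1-\varepsilon}$ globally. Here I would use the hypothesis $\nabla_{x'}u(x_0)\neq 0$ to argue that on a sufficiently small neighborhood of $x_0'$ in $\Gamma$, the set $\{v=0\}$ separates that neighborhood into two open pieces, on each of which $u$ has a definite sign. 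On each such piece the Neumann condition in~\eqref{statement_of_problem} becomes $\partial_{x_n}u=\pm k_\pm|u|^{p-1}$ with $u$ of a fixed sign, which is a smooth nonlinear function of $u$. Standard elliptic boundary regularity (applied to $\Delta u=0$ with this smooth nonlinear Neumann condition) then upgrades $u$, and hence $v$, to $C^{1,\alpha}$ up to (and including) the free boundary on each side, for every $\alpha<1$. Gluing the two one-sided regularities across $\{v=0\}$ (where $v$ and its tangential gradient match continuously) yields the desired two-sided $C^{1,\alpha}$ estimate for $v$ near $x_0'$.

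With $v\in C^{1,\alpha}$ near $x_0'$ and $\nabla v(x_0')\neq 0$, a rotation of the $x'$-coordinates places us in the situation $\partial_{x_1}v(x_0')\neq 0$. The implicit function theorem then produces a neighborhood $U\subset \mathbb{R}^{n-2}$ of $(x_0'_2,\dots,x_0'_{n-1})$ and a $C^{1,\alpha}$ function $\varphi:U\to\mathbb{R}$ such that, near $x_0$,
\[
\{v=0\}=\{(\varphi(x_2,\dots,x_{n-1}),x_2,\dots,x_{n-1}):(x_2,\dots,x_{n-1})\in U\},
\]
exhibiting the free boundary as a $C^{1,\alpha}$-graph, for every $\alpha<1$.

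The only real obstacle is the bootstrap step in the regime $1<p<2$: one has to be careful that the standard smooth-nonlinear-Neumann regularity actually applies up to the free boundary, since the nonlinearity $|u|^{p-1}$ is only Hölder at $u=0$. Working separately on each one-phase side (where $u$ does not change sign) is what makes the argument go through; if instead one tried to use Theorem~\ref{reg} as a black box, one would obtain only the $C^{1,p-1-\varepsilon}$-graph statement in the sub-quadratic range, which is weaker than what is claimed.
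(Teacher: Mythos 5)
Your main line --- restrict $u$ to $\Gamma$ to get $v=u(\cdot,0)$, invoke the boundary regularity of Theorem \ref{reg} to place $v$ in $C^{1,\alpha}$, and apply the implicit function theorem at a point where $\nabla_{x'}u\neq 0$ --- is exactly the paper's argument: the theorem is presented there as an immediate consequence of Theorem \ref{reg} and the implicit function theorem, with no further detail. For $p=2$ and for integer $p\geq 3$ your write-up is therefore complete and matches the source.

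The problem is your treatment of the remaining exponents, and in particular the range $1<p<2$, where your proposed one-sided bootstrap does not do what you claim. On the one-phase side $\{u>0\}$ the Neumann datum is $k_+u^{p-1}$, and the map $t\mapsto t^{p-1}$ is only $C^{0,p-1}$ at $t=0$ --- which is exactly the value $u$ attains on the free boundary. Since $x_0$ is a \emph{regular} point, $u(\cdot,0)$ vanishes to first order along $\mathcal{F}(u)$, so $u^{p-1}$ behaves like $\mathrm{dist}(\cdot,\mathcal{F}(u))^{p-1}$ near the free boundary: the datum is exactly $C^{0,p-1}$ and no better, even restricted to one side, and the oblique-derivative Schauder estimate then stalls at $u\in C^{1,p-1}$ up to the free boundary. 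Your assertion that the nonlinearity is ``smooth'' on each side up to and including $\{v=0\}$ is therefore false, and the gluing step yields only a $C^{1,p-1}$ graph; the analogue of Theorem \ref{optimal} indicates this exponent is sharp at regular points, so no rearrangement of this bootstrap will reach all $\alpha<1$ in this range. A milder version of the same issue occurs for non-integer $2<p<3$, where Theorem \ref{reg} as stated gives $C^{1,\alpha}$ only for $\alpha<p-2$, so your parenthetical claim that the non-integer branch is ``strictly stronger than $C^{1,\alpha}$'' for all $\alpha<1$ is also incorrect there. In short, your proof is complete precisely in the cases where the paper's one-line argument already is, and the additional argument you supply for $1<p<2$ does not close the gap it is intended to close.
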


We next turn our attention to the study of the singular set. To this end, in what follows we assume $p\geq 2$.
\begin{defn} \label{sigma mu}
For
\begin{equation*}
N^{x_0}(r;u)=r\ \frac{\int_{B_r^+(x_0)} |\nabla u|^2\ dx}{\int_{(\partial B_r(x_0))^+}  u^2\ d\sigma(x) }
\end{equation*}
and
$$
\mu=N^{x_0}(0+;u)=\lim_{r\to 0}N^{x_0}(r;u),
$$
we define the set of singular points with frequency $\mu$ as
$$
\Sigma_\mu(u)=\{x_0\in\Gamma\ |\ u(x_0)=0, \ \nabla_{x'}u(x_0)=0, \text{ and } N^{x_0}(0+;u)=\mu\}.
$$
The dimension of $\Sigma_\mu(u)$ at a point $x\in \Sigma(u)$ is
$$
d_\mu^{x_0}=\dim\{\zeta\in\mathbb{R}^n\ |\ \langle\zeta,\nabla_{x'}p_\mu^{x_0}(x',0)\rangle=0 \mbox{ for all }x'\in\mathbb{R}^{n-1}\},
$$
where $p_\mu^{x_0}$ is a homogeneous polynomial of degree $\mu$ as in Theorem \ref{unique}.
Finally, we introduce
$$
\Sigma_\mu^d(u)=\{x_0\in\Sigma_\mu\ |\ d_\mu^{x_0}=d\}.
$$
\end{defn}
We observe here that the existence of the limit in Definition \ref{sigma mu} is guaranteed by Corollary \ref{cor0}, and that it follows from the proof of Theorem \ref{Almgren_blowup} below that $\mu$ is necessarily a positive integer. The structure of the singular set is described in the following result.
\begin{thrm}\label{structure}
Let $u$ be the unique solution to \eqref{statement_of_problem}, with $p\geq 2$. Then for every $\mu\in\mathbb{N}$ and $d=0,1, \dots,n-2$, the set $\Sigma_\mu^d(u)$ is contained in the countable union of $d$-dimensional $C^1$-submanifolds of $\Gamma$.
\end{thrm}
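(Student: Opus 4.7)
The plan is to follow the Garofalo--Petrosyan program for the thin obstacle problem (see \cite{GP}), suitably modified to handle the non-homogeneous penalty boundary condition in \eqref{statement_of_problem}. The argument proceeds through four steps: (i) existence and uniqueness of polynomial blowups at singular points, (ii) a continuous-dependence estimate for the map $x_0 \mapsto p_\mu^{x_0}$, (iii) a Whitney extension, and (iv) the implicit function theorem after stratifying by $d$.

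First, by Theorem \ref{Almgren_blowup} (invoked in the excerpt), at any $x_0 \in \Sigma_\mu$ the rescalings $u_{x_0,r}(x) := u(x_0 + rx)/\bigl(r^{-(n-1)}\int_{(\partial B_r(x_0))^+} u^2\,d\sigma\bigr)^{1/2}$ converge along subsequences to a nonzero $\mu$-homogeneous harmonic polynomial $p_\mu^{x_0}$ satisfying $\partial_{x_n} p_\mu^{x_0} = 0$ on $\Gamma$; in particular $p_\mu^{x_0}$ is even in $x_n$ and can be regarded as a polynomial on $\mathbb{R}^{n-1}$. To upgrade this to uniqueness of the blowup, I would establish a Monneau-type monotonicity formula: for any $\mu$-homogeneous harmonic polynomial $q$ with $\partial_{x_n} q = 0$ on $\Gamma$, show that
\begin{equation*}
r \mapsto M^{x_0}_q(r) := \frac{1}{r^{n-1+2\mu}} \int_{(\partial B_r(x_0))^+} (u - q)^2 \, d\sigma
\end{equation*}
is essentially nondecreasing up to an integrable error. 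In the Signorini setting the proof uses that $q$ is a competitor against which $u$ can be compared via the Almgren quotient; here the penalty contributes an extra boundary term proportional to $\int_\Gamma (u^\pm)^{p-1}(u-q)\,dx'$, which for $p \geq 2$ decays like $r^{(p-1)\mu + \mu - n + 1}$ and is therefore integrable in $r$ near $0$ (since $(p-2)\mu \geq 0$ and $\mu \geq 1$). This dominated error is the one genuine new difficulty compared to \cite{GP} and is where the hypothesis $p \geq 2$ is used. Once $M^{x_0}_q$ has a limit for every admissible $q$, taking $q = p_\mu^{x_0}$ (a subsequential blowup) forces $\lim_{r\to 0} M^{x_0}_{p_\mu^{x_0}}(r) = 0$, which yields uniqueness of the blowup.

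Next, I would establish the continuous dependence of $x_0 \mapsto p_\mu^{x_0}$ on $\Sigma_\mu$, and more quantitatively a modulus-of-continuity estimate
\begin{equation*}
|p_\mu^{x_0}(x) - p_\mu^{y_0}(x)| \leq \omega(|x_0 - y_0|)\, |x|^\mu, \qquad x_0, y_0 \in \Sigma_\mu \cap K,
\end{equation*}
for every compact $K \subset \Gamma$, with $\omega(t) \to 0$ as $t\to 0$. This follows from the uniform decay estimate $|u(x_0 + x) - p_\mu^{x_0}(x)| = o(|x|^\mu)$ coming from the Monneau formula, together with the fact that $\Sigma_\mu$ is relatively closed in $\Gamma$ (which in turn follows from upper semicontinuity of the frequency, Corollary \ref{cor0}).

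Having $\{p_\mu^{x_0}\}_{x_0 \in \Sigma_\mu}$ as a compatible family of $\mu$-jets with a uniform modulus of continuity, I would apply Whitney's extension theorem to produce a function $F \in C^{\mu-1,1}(\mathbb{R}^{n-1})$ whose Taylor polynomial of order $\mu$ at each $x_0 \in \Sigma_\mu$ equals $p_\mu^{x_0}$. Finally I stratify: on $\Sigma_\mu^d$, the polynomial $p_\mu^{x_0}(x',0)$ has a $d$-dimensional kernel of directional derivatives, so selecting $n-1-d$ multi-indices $\alpha_1,\dots,\alpha_{n-1-d}$ of order $\mu - 1$ corresponding to directions outside the kernel makes $\partial^{\alpha_i} F(x_0)=0$ while the differentials $d(\partial^{\alpha_i} F)(x_0)$ span an $(n-1-d)$-dimensional subspace of $(\mathbb{R}^{n-1})^*$. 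Covering $\Sigma_\mu^d$ by the countably many choices of such $(n-1-d)$-tuples and applying the implicit function theorem to $F$ then exhibits $\Sigma_\mu^d$ as contained in a countable union of $d$-dimensional $C^1$ graphs in $\Gamma$.

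The main obstacle is the Monneau-type formula in Step~1: the penalty term $k_+(u^+)^{p-1}-k_-(u^-)^{p-1}$ breaks the exact homogeneity that makes the Signorini computation clean, so one must track an error term in the derivative of $M^{x_0}_q(r)$ and show it is integrable down to $r = 0$ at singular points. All remaining steps are routine adaptations of the arguments in \cite{GP} once this monotonicity is in hand.
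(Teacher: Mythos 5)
Your proposal follows essentially the same route as the paper, which itself defers to Garofalo--Petrosyan after establishing exactly the ingredients you list: the almost-monotone Monneau functional with an integrable error produced by the penalty term (Theorem \ref{Monneau} and Corollary \ref{limMonn}, where the error is $Cr^{1+\mu(p-2)}$ rather than your exponent $r^{p\mu-n+1}$, though the integrability conclusion for $p\geq 2$ is the same), uniqueness and continuous dependence of the homogeneous blowups (Theorems \ref{unique} and \ref{contdep}), the $F_\sigma$ structure of $\Sigma_\mu$ (Corollary \ref{F-sigma} --- note $\Sigma_\mu$ is $F_\sigma$, not relatively closed as you assert), and then Whitney extension plus the implicit function theorem. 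The only correction worth recording is that the Whitney extension must be taken in $C^\mu$ (not merely $C^{\mu-1,1}$) so that the derivatives $\partial^{\alpha_i}F$ of order $\mu-1$ are genuinely $C^1$ and the implicit function theorem yields $C^1$ rather than Lipschitz graphs; your uniform $o(|x|^\mu)$ estimate is precisely the hypothesis that delivers this.
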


The proof of Theorem \ref{structure} follows the ideas of the corresponding result in \cite{GP} for the Signorini problem. It hinges on the monotonicity (or almost-monotonicity) of a perturbed Almgren functional and a Monneau-type functional (see Theorem \ref{almgren} and Corollary \ref{limMonn}). From these results we infer the growth rate and nondegeneracy of the solution near the free boundary. In turn, these properties allow to prove uniqueness and continuous dependance on the singular point of the blow-up limits. The rest of the proof is based on Whitney's extension and the implicit function theorem.

To conclude, we remark that considering a more general situation as in \eqref{gen} introduces significant technical difficulties. A standard approach, under suitable smoothness assumptions, consists in flattening $\Gamma_2$, which in turn leads to the study of a variable-coefficient operator and flat portion of the boundary. This problem, also with non-vanishing $h$, is the object of the recent paper \cite{DK}.

\subsection{Structure of the paper}
The paper is organized as follows. In Section 2 we describe some applications to problems of semi-permeable membranes and of temperature control, which motivate the study of \eqref{gen}. In Section 3 we establish existence and uniqueness of solutions, and prove Theorems \ref{reg} and \ref{optimal}. In Section 4 we prove the monotonicity of the perturbed functional of Almgren type, and infer some properties of the solution as a consequence. In Section 5 we introduce the Almgren rescalings, and discuss their blow-up limits. In Section 6 we prove the almost-monotonicity of a Monneau-type functional, and establish nondegeneracy of solutions. Finally, Section 7 is devoted to the proof of Theorem \ref{structure}.

\subsection{Acknowledgments} The authors wish to thank the anonymous referee, whose comments and suggestions helped to improve the readability of the paper.

 \section{Motivation}

 \subsection{Semi-permeable membranes} Following \cite[Section 2.2.2]{DL}, we briefly describe  the process of \emph{osmosis through semi-permeable walls}.  By $\Omega$ we denote a domain in  $\mathbb{R}^n$, $n\geq 2$,  with sufficiently regular boundary $\partial\Omega$. The region $\Omega$ consists of a porous medium occupied by a  viscous fluid which is only slightly compressible, and we denote its pressure field by $u(x)$. We assume that a portion  $\Gamma$ of $\partial\Omega$ consists of a semi-permeable membrane of finite  thickness, i.e. the fluid can freely enter in $\Omega$, but the outflow of fluid is prevented. Combining the law of conservation of mass with Darcy's law, one finds that $u$ satisfies the equation
$$
\Delta u- \frac{\partial u}{\partial t} =f\mbox{ in }\Omega,
$$
where $f=f(x,t)$ is a given function. When a fluid pressure $h(x)$, for $x\in\Gamma$, is applied to $\Gamma$ on the outside of $\Omega$, one of two cases holds:
$$
h(x)<u(x,t)\mbox{ or } h(x)\geq u(x,t).
$$
In the former, the semi-permeable wall prevents the fluid from leaving $\Omega$, so that the flux  is null. If we let $\nu$ denote the outer unit normal to $\Gamma$, we then have
\begin{equation}\label{case1}
\frac{\partial u}{\partial \nu}=0.
\end{equation}
In the latter case, the fluid enters $\Omega$. It is reasonable to assume the outflow to be proportional to the difference in pressure, so that
\begin{equation}\label{case2}
-\frac{\partial u}{\partial \nu} =k(u-h),
\end{equation}
where $k>0$ measures the conductivity of the wall. Combining  \eqref{case1} and \eqref{case2}, we obtain the boundary condition
\begin{equation}\label{bdry}
\frac{\partial u}{\partial \nu} =k(u-h)^-\mbox{ on }\Gamma.
\end{equation}

In our model \eqref{gen}, we allow  for fluid flow to occur both into and out of $\Omega$ with different permeability constants, under the assumption that  the flux in each direction is proportional to a power of the pressure.

\subsection{Temperature control} An alternative interpretation of the model is as a \emph{boundary temperature control problem}, which we only briefly outline here.   We assume that a continuous medium occupies a region $\Omega$ in $\mathbb{R}^n$, with boundary $\Gamma$ and outer unit normal $\nu$. Given a reference temperature $h(x)$, for $x\in \Gamma$, it is required that the temperature at the boundary $u(x,t)$ deviates as little as possible from $h(x)$. To this end, thermostatic controls are placed on the boundary to inject an appropriate heat flux when necessary. The controls are regulated as follows:
\begin{itemize}
\item[(i)] If $u(x,t)=h(x)$, no correction is needed and therefore the heat flux is null.
\item[(ii)] If $u(x,t)\neq h(x)$, a quantity of heat proportional to the difference between $u(x,t)$ and $h(x)$ is injected.
\end{itemize}
We can thus write the boundary condition as
$$
-\frac{\partial u}{\partial \nu}=\Phi(u),
$$
where
\begin{equation*}
\Phi(u)=\begin{cases}
k_-(u-h)\qquad&\text{ if } u< h\\
0&\text{ if }u=h\\
k_+(u-h)\qquad&\text{ if } u> h
\end{cases}
\end{equation*}

More in general, one can assume that $\Phi(u)$ is a continuous and increasing function of $u$. For further details, we refer to  \cite[Section 2.3.1]{DL}, see also  \cite{AC1}  for the limiting case $k_-=0$ and $k_+=+\infty$ and \cite{ALP} for the case $p=1$ in \eqref{energy}.

\section{Optimal regularity of solutions}

We begin this section by proving existence and uniqueness of minimizers to~\eqref{energy}.
We let $\mathcal{K} =\{v\in W^{1,2}(B_1) \mid v-g \in W^{1,2}_0(B_1) \}$.
\begin{lm}\label{exist_minimizer}
There exists a unique minimizer $u \in \mathcal{K}$ for the energy $J(v)$ given by~\eqref{energy}.
\end{lm}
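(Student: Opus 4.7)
My plan is to invoke the direct method of the calculus of variations. First I would observe that $J(v)\geq 0$ on $\mathcal{K}$ (every term is nonnegative) and that $\mathcal{K}$ is nonempty, since $g\in\mathcal{K}$ and its boundary energy is finite by standard trace embeddings (using the subcritical range of $p$, or directly from the smoother hypothesis $g\in W^{1,q}(\Omega)$ of Theorem~\ref{reg}). Thus $m:=\inf_{\mathcal{K}} J$ is finite. Picking a minimizing sequence $\{v_j\}\subset\mathcal{K}$ with $J(v_j)\to m$, the Dirichlet part of $J(v_j)$ is uniformly bounded, and the Poincar\'e inequality applied to $v_j-g\in W^{1,2}_0(B_1)$ promotes this to a uniform bound of $\{v_j\}$ in $W^{1,2}(B_1)$. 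Extracting a weakly convergent subsequence $v_{j_k}\rightharpoonup u$ in $W^{1,2}(B_1)$, convexity and norm-closedness of $\mathcal{K}$ guarantee $u\in\mathcal{K}$.

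Next I would verify weak lower semicontinuity of $J$ along this sequence. The Dirichlet integral is convex and norm-continuous, hence weakly lower semicontinuous. For the boundary term I would use the compactness of the trace map $W^{1,2}(B_1)\hookrightarrow L^2(\Gamma)$ to extract (after a further subsequence) traces converging a.e.\ on $\Gamma$ to $u|_\Gamma$. Since $t\mapsto \tilde{k}_-(t^-)^p+\tilde{k}_+(t^+)^p$ is continuous and nonnegative, Fatou's lemma gives
\[
\int_\Gamma\bigl(\tilde{k}_-(u^-)^p+\tilde{k}_+(u^+)^p\bigr)\,dx'\;\leq\;\liminf_k\int_\Gamma\bigl(\tilde{k}_-(v_{j_k}^-)^p+\tilde{k}_+(v_{j_k}^+)^p\bigr)\,dx',
\]
and combining with the Dirichlet part yields $J(u)\leq m$, so $u$ attains the minimum.

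For uniqueness I would argue by strict convexity. If $u_1,u_2\in\mathcal{K}$ were two minimizers, then $(u_1+u_2)/2\in\mathcal{K}$. The Dirichlet integrand $\xi\mapsto |\xi|^2$ is strictly convex, and the boundary density $t\mapsto \tilde{k}_\pm (t^\pm)^p$ is convex for $p\geq 1$ (as the composition of the convex nondecreasing map $s\mapsto s^p$ on $[0,\infty)$ with the convex function $t\mapsto t^\pm$). Hence $J$ is convex and the midpoint evaluation would force $\nabla u_1=\nabla u_2$ a.e.\ in $B_1$; since $u_1-u_2\in W^{1,2}_0(B_1)$, Poincar\'e gives $u_1\equiv u_2$.

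The main obstacle I anticipate is precisely the lower semicontinuity of the boundary penalty: $(v^\pm)^p$ is neither quadratic nor obviously weakly LSC when viewed on traces. The cleanest way around it is the compactness-plus-Fatou route above, which sidesteps any need for strong $L^p(\Gamma)$ convergence of the traces and works uniformly in $p>1$ as long as the initial boundary energy $\int_\Gamma(g^\pm)^p\,dx'$ is finite — which is exactly what ensures $m<\infty$ in the opening step.
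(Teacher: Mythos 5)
Your proposal is correct and follows essentially the same route as the paper: direct method with a Poincar\'e bound on the minimizing sequence, weak lower semicontinuity of the Dirichlet term, and compactness of the trace operator combined with a.e.\ convergence and Fatou's lemma for the boundary penalty. Your uniqueness argument via strict convexity of the Dirichlet part and convexity of $t\mapsto(t^\pm)^p$ is precisely the ``standard argument'' the paper invokes after noting $(f+g)^\pm\leq f^\pm+g^\pm$, just spelled out in more detail.
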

\begin{proof} Throughout this proof we will pass to subsequences whenever necessary without comment. Let $u_l$ be a minimizing sequence. Then $\Vert \nabla u_l \Vert_2$ is clearly bounded owing to the form of the energy itself. By using the Poincar\'e inequality on $u_l-g$ we deduce that the sequence $u_l$ is bounded in the $W^{1,2}(B_1)$ norm. Thus there exists a weak limit $u$ which is necessarily in $\mathcal{K}$.
We may assume that $u_l \to u$ in $L^2$ and a.e. The weak convergence of $u_l$ to $u$ in $W^{1,2}$ and the strong  convergence in $L^2$ imply that
\[
\int_{B_1} |\nabla u|^2\ dx\leq \liminf_{l\to\infty}\int_{B_1} |\nabla u_l|^2\ dx.
\]
This clearly follows from the property of weak convergence
 \[
 \Vert u \Vert_{W^{1,2}(B_1)} \leq \liminf_{l\to\infty} \Vert u_l \Vert_{W^{1,2}(B_1)}
 \]
 and, because of the strong $L^2$ convergence, the inequality must fall on the gradient part of the norm.

 To prove that $u$ is a minimizer we must show then that
 \[
 \int_\Gamma (u^\pm)^p\ dx' \leq \liminf_{l\to\infty} \int_\Gamma (u_l^\pm)^p\ dx'.
 \]
It will suffice to demonstrate this for $u^-$; the result for $u^+$ is proved in an analogous fashion. The trace operator $T : W^{1,2}(B_1^+)\to L^2(\partial B_1^+)$ is a bounded linear operator, since the half ball is a Lipschitz domain. Furthermore, in this setting it is a compact operator, and thus takes weakly convergent sequences to strongly convergent ones. Suppressing the $Tu_l$ notation and simply writing $u_l$ we then have that
\[
u_l \to u \quad \text{in } L^2(\Gamma).
\]
From this we may assume that $u_l \to u$ a.e. on $\Gamma$. But then clearly $(u^-_l)^p \to (u^-)^p$ a.e. and applying Fatou's Lemma we have
\[
\int_\Gamma (u^-)^p\ dx' \leq \liminf \int_\Gamma (u_l^-)^p\ dx'
\]
which completes the proof of existence.

Uniqueness follows by observing that $(f+g)^\pm \leq f^\pm+g^\pm$, and then applying standard arguments.
 \end{proof}

 Next, we recall the definition of a weak solution (see [L]):
 \begin{defn}\label{def_weak}
We say that $u$ is a weak solution to
\begin{equation*}
\begin{cases}
\Delta u &=0  \text{ in } B_1^+\\
u_\xn &=f \text{ on } \Gamma
\end{cases}
\end{equation*}
if for every $\xi\in C^\infty(B_1^+)$ vanishing on $(\partial B_1)^+$ we have
\[
\intB \nabla u \nabla \xi\ dx = -\int_\Gamma f\xi \ dx'
\]
\end{defn}

It is easy to show that the minimizer $u$ is a weak solution to our problem.

\begin{lm}\label{lemma.weak}
The minimizer $u$ obtained in Lemma \ref{exist_minimizer} is a weak solution to \eqref{statement_of_problem}. That is,
\begin{equation}\label{weak_solution}
\int_{B_1^+} \nabla u \nabla \xi\ dx = -\int_\Gamma ( -k_-(u^-)^{p-1}+k_+(u^+)^{p-1})\xi\ dx'
\end{equation}
for all $\xi \in C^\infty(B_1^+)$ vanishing on $(\partial B_1)^+$.
\end{lm}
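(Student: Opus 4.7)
The proof is a standard first-variation calculation: since $u$ minimizes $J$, the directional derivative of $J$ at $u$ in the direction of any admissible perturbation must vanish, and this stationarity condition unpacks into exactly \eqref{weak_solution}. I would proceed as follows.

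Fix $\xi \in C^\infty(B_1^+)$ vanishing on $(\partial B_1)^+$, and extend it to $B_1$ by even reflection across $\Gamma$, producing $\Xi \in W^{1,\infty}(B_1)$ which still vanishes on $\partial B_1$; in particular $\Xi \in W^{1,q}_0(B_1)$. Hence for every real $t$ the competitor $u + t \Xi$ lies in $\mathcal{K}$, and minimality of $u$ gives $J(u) \leq J(u+t\Xi)$ for all $t$, so $\frac{d}{dt}J(u+t\Xi)\big|_{t=0}=0$ provided this derivative exists.

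The Dirichlet part $t \mapsto \tfrac{1}{2}\int_{B_1}|\nabla(u+t\Xi)|^2\,dx$ is quadratic in $t$, so differentiation is immediate and yields $\int_{B_1} \nabla u \cdot \nabla \Xi\,dx$; by the evenness of both $u$ and $\Xi$ this reduces to a multiple of $\int_{B_1^+}\nabla u \cdot \nabla \xi\,dx$. The boundary integrands $t \mapsto \tilde k_\pm\big((u+t\Xi)^\pm\big)^p$ are the interesting piece. For $p>1$ the scalar functions $s \mapsto (s^\pm)^p$ are $C^1(\mathbb{R})$ with derivatives $\pm p(s^\pm)^{p-1}$ that are continuous (and vanish at $s=0$), so pointwise on $\Gamma$ one gets
\begin{equation*}
\frac{d}{dt}\Big(\tilde k_-((u+t\xi)^-)^p + \tilde k_+((u+t\xi)^+)^p\Big)\Big|_{t=0}
= p\bigl(-\tilde k_-(u^-)^{p-1} + \tilde k_+(u^+)^{p-1}\bigr)\xi.
\end{equation*}
To move the $t$-derivative under the integral, I would apply dominated convergence: the mean value theorem bounds the difference quotients by $p(|u|+|\xi|)^{p-1}|\xi|$, and since $u \in L^p(\Gamma)$ (by the trace theorem, as used already in Lemma \ref{exist_minimizer}) and $\xi$ is bounded on $\Gamma$, this majorant is in $L^1(\Gamma)$ uniformly for $|t|\leq 1$.

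Combining the two pieces, setting the full derivative to zero, using $\tilde k_\pm = 2k_\pm/p$ to convert $\tfrac{p}{2}\tilde k_\pm$ into $k_\pm$, and restricting via even symmetry to $B_1^+$ produces exactly \eqref{weak_solution}. The only genuinely technical step is justifying the differentiation of the boundary term; everything else is bookkeeping. The argument is cleanest done first for $p \geq 2$ (where the derivative $p(s^\pm)^{p-1}$ is locally Lipschitz) and extended to $1 < p < 2$ by the dominated-convergence argument above, which requires only that $s \mapsto (s^\pm)^p \in C^1$ and that $u \in L^p(\Gamma)$.
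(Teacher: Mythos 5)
Your proposal is correct and is precisely the standard first-variation argument that the paper itself invokes by reference (its ``proof'' consists of citing Lemma 4.1 of \cite{ALP}), including the one genuinely technical point of passing the $t$-derivative under the boundary integral via the mean value theorem, the integrability of $|u|^{p-1}$ on $\Gamma$, and dominated convergence. One bookkeeping caveat: with the literal normalization $\tilde{k}_\pm=2k_\pm/p$ and the Dirichlet integral taken over all of $B_1$ in \eqref{energy}, the vanishing first variation for an even perturbation reads $2\int_{B_1^+}\nabla u\cdot\nabla\xi\ dx=-\int_\Gamma(-k_-(u^-)^{p-1}+k_+(u^+)^{p-1})\xi\ dx'$, i.e.\ there is a residual factor of $2$ relative to \eqref{weak_solution} --- this traces back to the paper's own choice of constants rather than to any flaw in your argument, but you should not assert that the constants match ``exactly'' without checking it.
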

\begin{proof}
This is a standard variational fact. See for example the proof of Lemma 4.1 in [ALP].
\end{proof}
\begin{rmk} The $-k_-$ term appears since $u^- = -\min\{u,0\}$.
\end{rmk}

We now turn to the regularity of the solution. Our strategy will be to first prove an initial H\"older regularity which will improve afterwards. The first step  is an energy estimate for $u$.

\begin{lm}\label{energy_est}
Let $u$ be the minimizer of \eqref{energy}. Then we have for any $B_{2r} \subset B_1$
\[
\int_{B_r} |\nabla u|^2\ dx \leq \frac{c}{r^2}\int_{B_2r} u^2 \ dx.
\]
\end{lm}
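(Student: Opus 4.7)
The plan is to run a standard Caccioppoli argument, using the weak formulation \eqref{weak_solution} tested against $\xi = u\eta^2$, where $\eta\in C_c^\infty(B_{2r})$ is a cutoff with $\eta\equiv 1$ on $B_r$ and $|\nabla\eta|\leq C/r$. Although $u\eta^2$ is only in $W^{1,2}(B_1^+)$ rather than $C^\infty(B_1^+)$, it is compactly supported away from $(\partial B_1)^+$ since $B_{2r}\Subset B_1$, so a routine density argument extends \eqref{weak_solution} to this test function.

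The essential observation is that the nonlinear boundary term has the right sign. Using $u = u^+ - u^-$ together with $u^+ u^- = 0$ pointwise on $\Gamma$, one obtains
\[
\bigl(-k_-(u^-)^{p-1}+k_+(u^+)^{p-1}\bigr)\,u\eta^2 \;=\; \bigl(k_+(u^+)^p + k_-(u^-)^p\bigr)\eta^2 \;\geq\; 0,
\]
so the right-hand side of \eqref{weak_solution} is nonpositive. Expanding $\nabla(u\eta^2) = \eta^2\nabla u + 2u\eta\,\nabla\eta$, this yields
\[
\int_{B_1^+} |\nabla u|^2 \eta^2\, dx \;\leq\; -2\int_{B_1^+} u\eta\,\nabla u\cdot\nabla\eta\, dx.
\]

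I would then apply Young's inequality $|2u\eta\,\nabla u\cdot\nabla\eta|\leq \tfrac12|\nabla u|^2\eta^2 + 2u^2|\nabla\eta|^2$ and absorb the gradient term into the left-hand side. Together with $\eta\equiv 1$ on $B_r$ and $|\nabla\eta|\leq C/r$, this produces
\[
\int_{B_r^+} |\nabla u|^2\, dx \;\leq\; \frac{C}{r^2}\int_{B_{2r}^+} u^2\, dx.
\]
Since $u$ is even-reflected across $\Gamma$ on all of $B_1$, both sides of the inequality simply double under the reflection, giving the statement as written on $B_r$ and $B_{2r}$. I do not anticipate any serious obstacle: the only subtlety is the sign of the boundary contribution, which turns out to favor the estimate, and the mild technical point of justifying $u\eta^2$ as an admissible test function by approximation.
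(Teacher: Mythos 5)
Your proof is correct and follows essentially the same Caccioppoli argument as the paper, with the same key observation that the boundary term $\bigl(-k_-(u^-)^{p-1}+k_+(u^+)^{p-1}\bigr)\xi$ has a favorable sign once $u^+u^-=0$ is used. The only (harmless) difference is that you test with $u\eta^2$ directly, whereas the paper tests separately with $u^-\eta^2$ and $u^+\eta^2$ and adds the two estimates --- a splitting that the paper reuses later to record that $u^\pm$ are subharmonic, but which is not needed for the lemma itself.
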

 \begin{proof}
We first prove the corresponding estimate for $u^-=-\min\{u,0\}$. Let $\eta\in C_0^\infty(B_{2r})$ with
 \[
 \eta \equiv 1 \text{ in } B_r, \quad |\nabla \eta |\leq \frac{c}{r^2}.
 \]
Taking $\xi = u^-\eta^2$ and using~\eqref{weak_solution} we have
\begin{align*}
\int_{B_1} \nabla u \nabla (u^-\eta^2)\ dx &= -\int_\Gamma ( -k_-(u^-)^{p-1}+k_+(u^+)^{p-1})u^-\eta^2\ dx'\\
&=-\int_\Gamma ( -k_-(u^-)^{p-1})u^-\eta^2\ dx' \geq 0.
\end{align*}
Expanding yields
 \[
 \int_{B_1}\left(\eta^2\nabla u\nabla u^-  +2u^-\eta \nabla u\nabla\eta\right)\ dx=  \int_{B_1}-|\nabla u^-|^2 \eta^2 -2u^-\eta \nabla u^-\nabla\eta \ dx \geq 0
 \]
 or
 \[
  \int_{B_1}|\nabla u^-|^2 \eta^2 \ dx\leq -\int_{B_1}2u^-\eta \nabla u^-\nabla\eta\ dx.
 \]
At this point standard energy arguments imply
 \[
 \int_{B_r} |\nabla u^-|^2 \ dx\leq \frac{c}{r^2} \int_{B_{2r}} (u^-)^2 \ dx.
 \]
 A similar argument implies the same inequality with $u^+$; together they yield the energy estimate for $u$.
 \end{proof}

 Next, we use the energy estimate to prove an initial H\"older modulus of continuity for $u$. This regularity is much lower than optimal, but it will allows us to bootstrap to obtain higher regularity.

\begin{lm}\label{Holder_cont}
The  solution to \eqref{statement_of_problem} is in $C^{0,1/2}(\overline{B_{1/2}})$.
\end{lm}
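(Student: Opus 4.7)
The plan is to prove the lemma in two stages: first establish local $L^\infty$ bounds for $u$ via Moser iteration, then derive the $C^{0,1/2}$ estimate by comparing $u$ to a harmonic reference on small half-balls and running a Campanato--Morrey iteration.

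For the $L^\infty$ bound, I would test the weak equation \eqref{weak_solution} with $\xi = ((u-k)^+)^{2\beta-1}\eta^2$ for $k\geq 0$, $\beta\geq 1$, and a smooth cutoff $\eta$. Since $u^- \equiv 0$ on $\{u>k\}$, the boundary contribution collapses to the non-negative term $\int_\Gamma k_+ u^{p-1}((u-k)^+)^{2\beta-1}\eta^2\,dx'$, which can be dropped. The resulting Caccioppoli-type inequality
\begin{equation*}
\int_{B_1^+}\bigl|\nabla\bigl(((u-k)^+)^\beta\bigr)\bigr|^2\eta^2\,dx \leq C_\beta \int_{B_1^+}((u-k)^+)^{2\beta}|\nabla\eta|^2\,dx,
\end{equation*}
combined with Sobolev embedding (applied to the even reflection of the truncated power across $\Gamma$) and iteration on $\beta_j = (n/(n-2))^j$, yields $\|u^+\|_{L^\infty(B_{3/4}^+)} \leq C\|u\|_{L^2(B_1^+)}$. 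A symmetric argument applied to $(u-k)^-$ with $k\leq 0$ controls $u^-$, so $u\in L^\infty(B_{3/4}^+)$ and hence $M:=\|k_+(u^+)^{p-1}-k_-(u^-)^{p-1}\|_{L^\infty(\Gamma\cap B_{3/4})} < \infty$.

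For the Hölder estimate, fix $x_0 \in \Gamma \cap \overline{B_{1/2}}$ and $r\in(0,1/4)$ and introduce the harmonic corrector $v$ solving the mixed boundary value problem $\Delta v = 0$ in $B_r^+(x_0)$, $v = u$ on $(\partial B_r(x_0))^+$, and $\partial_{x_n}v = 0$ on $\Gamma\cap B_r(x_0)$. Even reflection across $\Gamma$ extends $v$ to a harmonic function on the full ball $B_r(x_0)$, so $|\nabla v|^2$ is subharmonic there and $\int_{B_\rho^+(x_0)}|\nabla v|^2 \leq (\rho/r)^n \int_{B_r^+(x_0)}|\nabla v|^2$ for $\rho\leq r$; moreover $v$ is the Dirichlet-energy minimizer in its admissible class, so $\int_{B_r^+}|\nabla v|^2 \leq \int_{B_r^+}|\nabla u|^2$. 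The remainder $w = u-v$ has zero Dirichlet trace on $(\partial B_r)^+$ and Neumann data bounded by $M$ on $\Gamma\cap B_r$; testing the equation satisfied by $w$ against $w$ itself and applying the trace inequality together with Poincaré (using $w=0$ on $(\partial B_r)^+$) yields $\int_{B_r^+}|\nabla w|^2 \leq CM^2 r^n$.

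Setting $E(\rho):=\int_{B_\rho^+(x_0)}|\nabla u|^2$, the preceding bounds combine into the decay recursion
\begin{equation*}
E(\rho)\leq C_1 (\rho/r)^n E(r) + C_2 M^2 r^n, \qquad 0 < \rho \leq r \leq 1/4.
\end{equation*}
A standard Campanato iteration then yields $E(\rho)\leq C\rho^{n-1}$ for small $\rho$, and Morrey's characterization gives $u \in C^{0,1/2}$ near $x_0$; interior harmonic regularity handles points away from $\Gamma$, and the even reflection convention extends the estimate to $\overline{B_{1/2}}$. I expect the main technical obstacle to lie in the Moser step: verifying that the one-sided power $((u-k)^+)^\beta$ is a legitimate test function after even reflection, and that the Sobolev embedding applies uniformly across scales, requires careful bookkeeping of the nonlinear Neumann contribution; once boundedness is in hand, the comparison-plus-Campanato scheme is largely routine.
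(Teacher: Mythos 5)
Your argument is correct, and while it shares the overall skeleton of the paper's proof (compare $u$ to a harmonic replacement on small balls centered on $\Gamma$, then invoke Morrey's Dirichlet growth theorem), both key estimates are derived by a genuinely different mechanism. For the $L^\infty$ bound, the paper observes that $u^\pm$ are (weakly) subharmonic across $\Gamma$, so $|u|=u^++u^-$ is subharmonic and the maximum principle gives $\sup_{B_1}|u|\leq\sup|g|$; your Moser iteration is the quantitative interior version of the same sign observation (the nonlinear Neumann term is dropped because it has a favorable sign on one-signed truncations) and yields the bound in terms of $\|u\|_{L^2(B_1^+)}$ rather than $\sup|g|$. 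For the comparison estimate, the paper exploits the minimality of $u$ for the functional $J$, obtaining $\int_{B_r}|\nabla u-\nabla v|^2\leq Cr^{n-1}$ directly from the boundedness of the boundary integrand, and then cites \cite[Theorem 3.1]{AP} for the resulting growth of $\int_{B_r}|\nabla u|^2$; you instead use the Euler--Lagrange equation, testing the harmonic difference $w=u-v$ against itself and combining the trace inequality \eqref{trace} with $w=0$ on $(\partial B_r)^+$ to absorb, which gives the sharper bound $\int_{B_r^+}|\nabla w|^2\leq CM^2r^n$. Your Campanato recursion then closes with room to spare (it in fact gives $\int_{B_\rho^+}|\nabla u|^2\leq C_\alpha\rho^{n-2+2\alpha}$ for every $\alpha<1$, consistent with the $C^{1,\gamma}$ regularity obtained later in Theorem \ref{reg}). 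The only points requiring care are the ones you flag: admissibility of the power test functions $((u-k)^+)^{2\beta-1}\eta^2$ (handled by the usual truncation at level $N$, since a priori one only knows $u\in W^{1,q}$), the modification of the Sobolev exponent when $n=2$, and the routine covering/dichotomy argument to pass from half-balls centered on $\Gamma$ to arbitrary balls in $B_{1/2}$. None of these is an obstruction, so your proof stands as a self-contained alternative to the paper's variational comparison.
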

\begin{proof}
 Let $B_r := B_r(x)$ for $r<1/4$ and $x\in B_1$, and let $v$ be the harmonic replacement of $u$ in $B_r$. Set $\Gamma_r = B_r\cap \Gamma$.
By minimality we have
\begin{equation}\label{Holder_cont:eq1}
\int_{B_r} \left(|\nabla u|^2 - |\nabla v|^2\right)\ dx \leq \int_{\Gamma_r} \left(k_-((v^-)^p-(u^-)^p)+k_+((v^+)^p-(u^+)^p)\right)\ dx'.
\end{equation}
However, since $v$ is harmonic we have
\[
\int_{B_r} \nabla v\cdot\nabla (v-u)\ dx= 0,
\]
and thus
\begin{equation}\label{Holder_cont:eq2}
\int_{B_r} |\nabla u - \nabla v|^2\ dx =\int_{B_r} \left(|\nabla u|^2 - |\nabla v|^2\right) \ dx.
\end{equation}

Next, since $v$ is the harmonic lifting of $u$, $|v| \leq |u|$ in $B_r$. In turn, the computation used in Lemma~\ref{energy_est} demonstrated that $u^\pm$ are subharmonic, and therefore $|u| = u^++u^-$ is as well. Thus, by the maximum principle, $\sup_{B_1} |u| \leq \sup_{\partial B_1} |u| = \sup g$, the given boundary data in~\eqref{statement_of_problem}. In particular,
\[
\int_{\Gamma_r} \left(k_-((v^-)^p-(u^-)^p)+k_+((v^+)^p-(u^+)^p)\right)\ dx' \leq Cr^{n-1},
\]
with $C$ independent of $x$ and $v$.
From this fact, combined with \eqref{Holder_cont:eq1} and~\eqref{Holder_cont:eq2}, we infer
\[
\int_{B_r} |\nabla u - \nabla v|^2\ dx \leq Cr^{n-1}.
\]
At this point, we can mimic the derivation in \cite[Theorem 3.1]{AP} to deduce that
\[
\int_{B_r} |\nabla u|^2\ dx \leq Cr^{n-1}.
\]
In turn Morrey's Dirichlet Growth Theorem (see for instance \cite[Corollary 9.1.6]{J}) implies the desired H\"older-1/2 regularity inside $B_{1/2}$.
\end{proof}

We have reached the proof of our main result:
\begin{proof}[Proof of Theorem \ref{reg}]

Existence and uniqueness follow from Lemma \ref{exist_minimizer}. Thus, we need only to show the desired regularity. From Lemma~\ref{lemma.weak} we know that $u$ is a weak solution to our problem on $B_1^+$. Moreover,
 $u$ is $C^{0,1/2}(\overline{B_{1/2}})$ by Lemma~\ref{Holder_cont}. The $`\pm'$ operation preserves H\"older regularity (with the same H\"older norm) so $u^\pm\in C^{0,1/2}(\overline{B_{1/2}})$ and in particular on the thin region $\Gamma$. This implies that
 \begin{equation}\label{MainThrm:eq1}
-k_-(u^-)^{p-1}+k_+(u^+)^{p-1}
 \end{equation}
is H\"older continuous of order $\gamma$, although $\gamma$ will in general not be $1/2$.

 Nevertheless, this implies that $u$ is a weak solution to an oblique derivative problem with H\"older continuous boundary data, namely $-k_-(u^-)^{p-1}+k_+(u^+)^{p-1}$. Regularity theory for such a problem (see e.g. \cite[Proposition 5.53]{L}) then yields that $u$ must be $C^{1,\gamma}$ up to the boundary, with
 $$
 |u|_{1+\gamma}\leq C\left(\sup|u|+|u|_\gamma\right).
 $$
 But in turn this implies that $u$ is Lipschitz up to the boundary, in which case~\eqref{MainThrm:eq1} is H\"older continuous of order $p-1$ when $p\leq 2$; if $p>2$ this is to be interpreted as differentiablity with a H\"older modulus of continuity. Applying the regularity theory once again we have the result of the theorem.

Now suppose that $g$ does not change sign. We aim to show that $u$ does not change sign either, in which case $u^\pm =u$ (and thus $u^\pm$ is as smooth as $u$ is) and the regularity result above can be bootstrapped to prove that $u$ is smooth. To this end, suppose that $g\geq 0$, but $u$ attains a minimum value which is negative, say $u(z) =m <0$. Then $z$ must lie on $\Gamma$. In particular, $z\in\Gamma_R=\Gamma\cap B_R$ for some $0<R<1$. Now, trivial modifications to the above arguments allow to show $u\in C^{1,\alpha}(\overline{B_R})$, and therefore we can assume that the restriction of  $u$ to $\Gamma_R$  is $C^{1,\alpha}$. Next, we apply the Hopf Lemma. Since $u$ is harmonic in the interior we must have
\[
\frac{\partial u}{\partial \nu}(z) <0.
\]
Here $\nu$ is the outer normal vector, which at the point $z$ is $-e_n$. Thus
\[
\frac{\partial u}{\partial {x_n}}(z) >0.
\]
However, the boundary condition along $\Gamma$ is given by
\[
\frac{\partial u}{\partial x_n} = k_+(u^+)^{p-1}-k_-(u^-)^{p-1},
\]
which holds in a classical sense since $u$ is $C^{1,\alpha}$ in a neighborhood of $z$. But $u(z)<0$, and therefore the boundary condition at $z$ is $u_{x_n} =-k_-u^-(z)<0$, a contradiction. We have thus shown that,  if $g\geq 0$, $u$ cannot be negative along $\Gamma$. As a consequence, $u$ is non-negative everywhere, so that $u^\pm = u$ and higher regularity follows by bootstrapping.

A similar argument shows that if $g\leq 0$ then $u\leq 0$ everywhere, which again implies higher regularity. Finally, the case $p$ integer and $k_+=k_-$ follows immediately from a repeated application of \cite[Proposition 5.53]{L}.

\end{proof}

We now show that, at least in the case $p=2$, the regularity obtained in Theorem \ref{reg} is optimal at points where the gradient of $u$ is non-vanishing.

\begin{proof}[Proof of Theorem \ref{optimal}] We argue by contradiction, and assume that $u\in C^{1,1}(0)$, with $\nabla u(0)\neq 0$. Thanks to Theorem \ref{reg}, we know that $u$ has a unique differential $P=\nabla u(0)$. Without loss of generality, we may assume that $P$ is also a superdifferential for $u^-$ (if not, consider $u^+$). We refer, for instance, to \cite[Chapter 3]{CaSi} for the definition and properties of superdifferentials. We begin by observing that we can write
$$
\frac{\partial u}{\partial x_n} = k_+u^+-k_+u^-+k_+u^--k_-u^-=k_+u+(k_+-k_-)u^-.
$$
Thus,
$$
(k_+-k_-)u^-=\frac{\partial u}{\partial x_n}-k_+u.
$$
From this, applying the extension theorem in \cite{CS} (with a slight abuse of notation, $u(x')$ denotes the restriction of $u(x)=u(x',x_n)$ to $x_n=0$) and the semigroup property of $(-\Delta)^s$, we deduce
\begin{align}
\notag(k_+-k_-)[-(-\Delta_{x'})^{1/2} u^-(x')]&=[-(-\Delta_{x'})^{1/2}]\circ [-(-\Delta_{x'})^{1/2}]u(x') -k_+[-(-\Delta_{x'})^{1/2}]u(x')\\
&=\Delta_{x'} u(x')-k_+\frac{\partial u}{\partial x_n}(x',0).\label{eq.opt}
\end{align}
  Because of our $C^{1,1}$ assumption, we have  that $C_0\leq u_{\tau\tau}(0)\leq C_1$ for some constants $C_0, C_1>0$ and for any tangential direction $\tau$. Hence, keeping also Theorem \ref{reg} in mind, it follows from \eqref{eq.opt}
 $$
| -(-\Delta_{x'})^{1/2} u^-(0)|\leq C_2
 $$
 for some $C_2>0$.  We now consider
$$
\psi(x)=\left[u^-(0)+\min\{P\cdot x, 0\}+\frac{C_1}{2}|x|^2\right]\chi_{B_1}.
$$
A straightforward computation yields
$$
-(-\Delta_{x'})^{1/2}\psi(0)=-\infty.
$$
In addition, $u^-(x)\leq \psi(x)$, with equality at $x=0$. From the definition of $(-\Delta)^{1/2}$ , we infer
$$
-(-\Delta_{x'})^{1/2}u^-(0)\leq -(-\Delta_{x'})^{1/2}\psi(0)=-\infty.
$$
But we showed above that $-(-\Delta_{x'})^{1/2}u^-(0)\geq -C_2$. We have thus reached a contradiction.
\end{proof}

\section{Monotonicity of a perturbed Almgren frequency functional}

In this section we establish some properties of the solution around  free boundary  points in the case $p\geq 2$. For $u$ solution to \eqref{statement_of_problem}, we define the \emph{coincidence set} $\Lambda(u)=\{(x',0)\ |\ u(x',0)=0\}$, and the \emph{free boundary} $\mathcal{F}(u)=\partial\Lambda(u)$. In the Signorini problem, the monotonicity of the Almgren's Frequency Functional
\begin{equation}\label{N_reg}
N(r;u)=N(r)=r\ \frac{\int_{B_r^+} |\nabla u|^2\ dx}{\int_{(\partial B_r)^+}  u^2\ d\sigma(x) }
\end{equation}
plays a fundamental role in the study of both the solution and the free boundary. In our setting, $N(r)$ may fail to be monotone, but a suitable perturbation is. We thus introduce the  \emph{perturbed Almgren Frequency Functional} at the point $x_0=0$ as
\begin{equation}\label{N_tilde}
\tilde{N}(r;u) =\tilde{N}(r)=r\ \frac{\int_{B_r^+} |\nabla u|^2\ dx+\frac{2}{p}\int_{\Gamma_r} F(u)\ dx'}{\int_{{(\partial B_r)}^+} u^2 \ d\sigma(x)},
\end{equation}
with $F(u) =k_-(u^-)^p +k_+(u^+)^p$ and $B_r=B_r(0)$.
\begin{thrm}\label{almgren} Let $u$ be a solution to \eqref{statement_of_problem}, with $p\geq 2$. Then $\tilde{N}(r;u)$ is monotone increasing in $r\in(0,1)$.
\end{thrm}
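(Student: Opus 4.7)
The plan is to compute the logarithmic derivative
\[
\frac{d}{dr}\log\tilde N(r)=\frac{1}{r}+\frac{D'(r)}{D(r)}-\frac{H'(r)}{H(r)}
\]
and show that the right-hand side is non-negative. Throughout I use the shorthand
\[
H(r)=\int_{(\partial B_r)^+}u^2\,d\sigma,\qquad D_0(r)=\int_{B_r^+}|\nabla u|^2\,dx,\qquad G(r)=\int_{\Gamma_r}F(u)\,dx',
\]
so the numerator of $\tilde N$ is $D(r):=D_0(r)+\tfrac{2}{p}G(r)$. By Theorem~\ref{reg} the solution $u$ is $C^{1,\alpha}$ up to $\Gamma$ and, since $p\geq 2$, $F(u)$ is $C^1$ along $\Gamma$, which justifies all the differentiations under the integral sign and the boundary integrations by parts that follow.

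For the denominator, spherical rescaling gives $H'(r)=\frac{n-1}{r}H(r)+2\int_{(\partial B_r)^+}u\,u_r\,d\sigma$, and integrating $\mathrm{div}(u\nabla u)$ over $B_r^+$ using $\Delta u=0$ yields
\[
\int_{(\partial B_r)^+}u\,u_r\,d\sigma=D_0(r)+\int_{\Gamma_r}u\,u_n\,dx'.
\]
The first key algebraic identity is that $u\cdot u_n=k_+(u^+)^p+k_-(u^-)^p=F(u)$ pointwise on $\Gamma$, so $\int_{(\partial B_r)^+}u u_r=D(r)+\frac{p-2}{p}G(r)$ and
\[
H'(r)=\frac{n-1}{r}H(r)+2D(r)+\frac{2(p-2)}{p}G(r).
\]

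For the numerator, I apply the Rellich--Pohozaev identity on $B_r^+$, obtained by contracting $\Delta u=0$ with the radial field $x\cdot\nabla u$. On $(\partial B_r)^+$ this produces the usual terms $r\int u_r^2$ and a multiple of $D_0$, while on $\Gamma_r$, where the outer normal is $-e_n$, it leaves the extra contribution $-\int_{\Gamma_r}(x'\cdot\nabla_{x'}u)u_n\,dx'$. The second key identity is that $F'(u)=p[k_+(u^+)^{p-1}-k_-(u^-)^{p-1}]=p\,u_n$ on $\Gamma$, so that $u_n\nabla_{x'}u=\frac{1}{p}\nabla_{x'}F(u)$, and tangential integration by parts against $x'$ gives
\[
\int_{\Gamma_r}(x'\cdot\nabla_{x'}u)u_n\,dx'=\frac{1}{p}\bigl[rG'(r)-(n-1)G(r)\bigr].
\]
Assembling these pieces with $D_0=D-\frac{2}{p}G$, the $G'$ contributions cancel and one arrives at
\[
D'(r)=\frac{n-2}{r}D(r)+2\int_{(\partial B_r)^+}u_r^2\,d\sigma+\frac{2}{rp}G(r).
\]

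Substituting the two formulas into $\frac{d}{dr}\log\tilde N$, the terms $\frac{1}{r}+\frac{n-2}{r}-\frac{n-1}{r}$ cancel exactly, leaving
\[
\tfrac{1}{2}\tfrac{d}{dr}\log\tilde N(r)=\frac{\int_{(\partial B_r)^+}u_r^2\,d\sigma}{D(r)}-\frac{D(r)}{H(r)}+\frac{G(r)}{p}\Bigl[\frac{1}{rD(r)}-\frac{p-2}{H(r)}\Bigr].
\]
The main and really only subtle step is verifying non-negativity. Cauchy--Schwarz on $(\partial B_r)^+$ gives $\bigl(\int_{(\partial B_r)^+}u u_r\bigr)^2\leq H\cdot\int_{(\partial B_r)^+}u_r^2$, which combined with the earlier identity $\int u u_r=D+\frac{p-2}{p}G$ yields
\[
\frac{\int u_r^2}{D}-\frac{D}{H}\;\geq\;\frac{(p-2)G}{pDH}\Bigl[2D+\frac{(p-2)G}{p}\Bigr].
\]
Adding the remaining bracket, the signed terms in $-\frac{(p-2)G}{pH}$ cancel precisely against part of the Cauchy--Schwarz deficit, producing the manifestly non-negative expression
\[
\tfrac{1}{2}\tfrac{d}{dr}\log\tilde N(r)\;\geq\;\frac{G}{p}\Bigl[\frac{p-2}{H}+\frac{(p-2)^2G}{pDH}+\frac{1}{rD}\Bigr]\;\geq\;0,
\]
where the hypothesis $p\geq 2$ is used exactly to ensure $p-2\geq 0$. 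The main obstacle is thus this final algebraic rearrangement: the Cauchy--Schwarz deficit must absorb the negative summand $-\frac{(p-2)G}{pH}$, and it does so precisely because the identity for $\int u u_r$ carries a matching $(p-2)G/p$ correction reflecting the boundary nonlinearity.
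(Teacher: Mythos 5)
Your proof is correct and follows essentially the same route as the paper's: both compute $H'$ and $D'$ via the divergence theorem and the Rellich identity, use the boundary identities $u\,u_{x_n}=F(u)$ and $u_{x_n}\nabla_{x'}u=\tfrac1p\nabla_{x'}F(u)$ to handle the $\Gamma_r$ terms, and conclude with Cauchy--Schwarz, with $p\geq 2$ entering through the sign of $1-\tfrac{2}{p}$. Your final bookkeeping is slightly more explicit (tracking the exact Cauchy--Schwarz deficit rather than the paper's three-bracket decomposition and the replacement $\tilde D\leq\int u u_\nu$), but the argument is the same.
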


\begin{proof} Let
\[
H(r) =\int_{{(\partial B_r)}^+} u^2\ d\sigma(x), \quad D(r) = \int_{B_r^+} |\nabla u|^2 \ dx.
\]
We begin by observing
\begin{equation}
H'(r) =\frac{n-1}{r}H(r) +2\int_{{(\partial B_r)}^+} uu_\nu \ d\sigma(x).
\end{equation}
We also have
\begin{align*}
D(r) &:= \int_{B_r^+} |\nabla u|^2\ dx = \int_{B_r^+} (|\nabla u|^2 +u\Delta u)\ dx\\
&= \int_{B_r^+} \Delta(\frac{u^2}{2})\ dx = \int_{{(\partial B_r)}^+} uu_\nu \ d\sigma(x)+\int_{\Gamma_r}uu_\nu\ dx'\\
&= \int_{{(\partial B_r)}^+} uu_\nu\ d\sigma(x) +\int_{\Gamma_r} [k_-(u^-)^{p-1} -k_+(u^+)^{p-1}]u\ dx'\\
&= \int_{{(\partial B_r)}^+} uu_\nu\ d\sigma(x)  - \int_{\Gamma_r} [ k_+(u^+)^p +k_-(u^-)^p]\ dx'.
\end{align*}
By Rellich's Identity
\begin{align*}
D'(r) &=\int_{{(\partial B_r)}^+} |\nabla u|^2\ d\sigma(x)\\
&=\frac{n-2}{r}\int_{B_r^+} |\nabla u|^2\ dx +2\int_{{(\partial B_r)}^+} u_\nu^2\ d\sigma(x) -\frac{2}{r}\int_{\Gamma_r} \langle x,\nabla u\rangle u_{x_n}\ dx'\\
& = \frac{n-2}{r}\int_{B_r^+} |\nabla u|^2 \ dx+2\int_{{(\partial B_r)}^+} u_\nu^2 \ d\sigma(x)-\frac{2}{r}\int_{\Gamma_r} \langle x,\nabla u\rangle(-k_-(u^-)^{p-1}+k_+(u^+)^{p-1})\ dx' ,
\end{align*}
which we can rewrite as
\begin{align}\label{eq.D1}
D'(r) &= \frac{n-2}{r}D(r) +2\int_{{(\partial B_r)}^+} u_\nu^2 \ d\sigma(x)-\frac{2}{pr}\int_{\Gamma_r} [k_-\langle x, \nabla(u^-)^p \rangle +k_+\langle x,\nabla(u^+)^p\rangle]\ dx'.
\end{align}
Using integration by parts we note that
\[
\int_{\Gamma_r} \langle x,\nabla(u^\pm)^p \rangle\ dx' = \int_{{\partial\Gamma_r}} r(u^\pm)^p\ d\sigma(x') -(n-1)\int_{\Gamma_r} (u^\pm)^p\ dx'.
\]
Applying this fact in \eqref{eq.D1} we obtain
\begin{align}\label{eq.D2}
D'(r) &= \frac{n-2}{r}D(r) +2\int_{{(\partial B_r)}^+} u_\nu^2\ d\sigma(x) \\
&-\frac{2}{pr}\left[\int_{\partial \Gamma_r} r(k_-(u^-)^p +k_+(u^+)^p)\ d\sigma(x') -(n-1) \int_{\Gamma_r}(k_-(u^-)^p+k_+(u^+)^p) \right]\ dx'.\notag
\end{align}
For the sake of brevity we will define
\[
\tilde{D}(r) =D(r) +\frac{2}{p}\int_{\Gamma_r}F(u)\ dx'.
\]
A direct computation, together with \eqref{eq.D1} and \eqref{eq.D2}, yields
\begin{align*}
\frac{\tilde{N}'(r)}{\tilde{N}(r)} &=\frac{1}{r}+ \frac{\tilde{D}'(r)}{\tilde{D}(r)}-\frac{H'(r)}{H(r)}\\
&= \frac{1}{r}+\frac{D'(r)+\frac{2}{p}\int_{\partial \Gamma_r} F(u)\ d\sigma(x')}{D(r)+\frac{2}{p} \int_{\Gamma_r}F(u)\ dx'}-\frac{n-1}{r}-2\frac{\int_{{(\partial B_r)}^+}uu_\nu \ d\sigma(x)}{\int_{{(\partial B_r)}^+} u^2\ d\sigma(x)}\\
&= \frac{1}{r}+\frac{n-2}{r}\frac{D(r)}{D(r) +\frac{2}{p}\int_{\Gamma_r} F(u)\ dx'}+2\frac{\int_{{(\partial B_r)}^+} u_\nu^2\ d\sigma(x)}{D(r) +\frac{2}{p}\int_{\Gamma_r}F(u)\ dx'}\\
&+\frac{2(n-1)}{pr}\frac{\int_{\Gamma_r} F(u)\ dx'}{D(r)+\frac{2}{p}\int_{\Gamma_r}F(u)\ dx'}
+\frac{1-n}{r} -2\frac{\int_{{(\partial B_r)}^+} uu_\nu\  d\sigma(x)}{\int_{{(\partial B_r)}^+} u^2 \ d\sigma(x)}.
\end{align*}
Collecting terms we have
\begin{align}\label{eq.N1}
\frac{\tilde{N}'(r)}{\tilde{N}(r)}=&\frac{1}{r}\left[1-\frac{D(r)}{D(r) +\frac{2}{p}\int_{\Gamma_r} F(u)\ dx'} \right]\\
&+2\left[\frac{\int_{{(\partial B_r)}^+} u_\nu^2\ d\sigma(x)}{D(r) +\frac{2}{p}\int_{\Gamma_r} F(u)\ dx'}-\frac{\int_{{(\partial B_r)}^+}uu_\nu\ d\sigma(x)}{\int_{{(\partial B_r)}^+} u^2\ d\sigma(x)}\right] \notag\\
&+\frac{n-1}{r}\left[ \frac{D(r)}{D(r) +\frac{2}{p}\int_{\partial \Gamma_r}F(u)\ dx'}+\frac{2}{p}\frac{\int_{\Gamma_r} F(u)\ dx'}{D(r) +\frac{2}{p}\int_{\Gamma_r}F(u)\ dx'}-1\right] \notag
\end{align}
Clearly the first term in \eqref{eq.N1} is non-negative, whereas the last one vanishes. On the other hand, from \eqref{statement_of_problem} we know
\begin{equation}\label{weak}
\int_{{(\partial B_r)}^+} uu_\nu\ d\sigma(x)\ = D(r) +\int_{\Gamma_r} F(u)\ dx' \geq D(r) +\frac{2}{p}\int_{\Gamma_r} F(u)\ dx',
\end{equation}
since $p\geq 2$. In turn this implies
\begin{equation*}
\frac{\int_{{(\partial B_r)}^+} u_\nu^2\ d\sigma(x)}{D(r) +\frac{2}{p}\int_{\Gamma_r} F(u)\ dx'}-\frac{\int_{{(\partial B_r)}^+} uu_\nu\ d\sigma(x)}{\int_{{(\partial B_r)}^+} u^2\ d\sigma(x)}
\geq \frac{\int_{{(\partial B_r)}^+} u_\nu^2 \ d\sigma(x)}{\int_{{(\partial B_r)}^+} uu_\nu\ d\sigma(x)}-\frac{\int_{{(\partial B_r)}^+} uu_\nu\ d\sigma(x)}{\int_{{(\partial B_r)}^+} u^2\ d\sigma(x)}\geq 0
\end{equation*}
 by the Cauchy-Schwartz inequality. Hence $\frac{\tilde{N}'(r)}{N(r)}\geq 0$, and the proof is complete.
\end{proof}

We now state some consequences of Theorem \ref{almgren}. The first result shows that, even if the Almgren's Frequency Functional $N(r)$ in \eqref{N_reg} fails to be monotone, it still has a limit as $r\to 0^+$, and in fact its limit coincides with the one of $\tilde{N}(r)$. In order to prove this result, we will need the following trace-type inequality (see, for instance, \cite[Lemma 2.5]{FF}).

\begin{lm}\label{Poincare} Let $u\in W^{1,2}(B_r^+)$. Then there is a bounded linear function $T: W^{1,2}(B_r^+)\to L^2(\partial B_r^+)$ such that $T(u)$ is the restriction of $u$ to $\partial B_r^+$ for any $u\in C^1(\overline{B_r^+})$. Moreover, there exists a  constant $C>0$ such that
\begin{equation}\label{trace}
\int_{\Gamma_r}u^2\ dx'\leq C\left(r\int_{B_r^+}|\nabla u|^2+\int_{(\partial B_r)^+} u^2\ d\sigma(x)\right).
\end{equation}
\end{lm}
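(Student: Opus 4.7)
\textbf{Proof proposal for Lemma \ref{Poincare}.} The plan is to reduce to the case $r=1$ by a scaling argument, then prove the unit-scale inequality by a one-dimensional fundamental theorem of calculus estimate along vertical segments.

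First, I would invoke the standard construction of the trace operator $T$ on Lipschitz domains (which applies to $B_r^+$) and the density of $C^1(\overline{B_r^+})$ in $W^{1,2}(B_r^+)$, so that it suffices to establish the inequality \eqref{trace} for $u\in C^1(\overline{B_r^+})$. Next, I would observe that the inequality is scale-invariant: setting $v(y)=u(ry)$ for $y\in B_1^+$, one checks by direct substitution that
\[
\int_{\Gamma_1}v^2\,dy' = r^{-(n-1)}\int_{\Gamma_r}u^2\,dx', \quad \int_{B_1^+}|\nabla v|^2\,dy = r^{2-n}\int_{B_r^+}|\nabla u|^2\,dx, \quad \int_{(\partial B_1)^+}v^2\,d\sigma = r^{-(n-1)}\int_{(\partial B_r)^+}u^2\,d\sigma,
\]
so the inequality for $v$ at scale $1$ implies the inequality for $u$ at scale $r$ with the same constant. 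It therefore suffices to prove \eqref{trace} when $r=1$.

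At scale $1$, I would use the fact that each point $(x',0)\in\Gamma_1$ is connected to the hemispherical cap $(\partial B_1)^+$ by the vertical segment from $(x',0)$ to $(x',\phi(x'))$, where $\phi(x')=\sqrt{1-|x'|^2}$. The fundamental theorem of calculus gives
\[
u(x',0) \;=\; u(x',\phi(x')) \;-\; \int_0^{\phi(x')}u_{x_n}(x',t)\,dt,
\]
and squaring while using $(a+b)^2\le 2a^2+2b^2$ together with Cauchy--Schwarz on the integral yields
\[
u(x',0)^2 \;\le\; 2\,u(x',\phi(x'))^2 \;+\; 2\phi(x')\int_0^{\phi(x')} u_{x_n}(x',t)^2\,dt.
\]
Integrating this inequality over $\{|x'|<1\}=\Gamma_1$, the second term is bounded above by $2\int_{B_1^+}|\nabla u|^2\,dx$ because $\phi\le 1$. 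For the first term, I would use the parametrization of $(\partial B_1)^+$ by the projection $y\mapsto y'$, under which the surface element is $d\sigma = dx'/\phi(x')$, equivalently $dx' = y_n\,d\sigma(y)$. Hence
\[
\int_{\Gamma_1}u(x',\phi(x'))^2\,dx' \;=\; \int_{(\partial B_1)^+}u^2\,y_n\,d\sigma \;\le\; \int_{(\partial B_1)^+}u^2\,d\sigma,
\]
since $0\le y_n\le 1$ on the upper hemisphere. Combining the two bounds gives \eqref{trace} at $r=1$ with constant $C=2$, and rescaling completes the proof.

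The argument is essentially routine, so there is no serious obstacle. The only points requiring minor care are the density/approximation step (which is where the Lipschitz regularity of $B_r^+$ enters) and the change of variables on $(\partial B_1)^+$, where one must correctly identify the Jacobian factor $y_n$ that ultimately makes the constant harmless.
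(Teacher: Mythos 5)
Your proof is correct, but it takes a genuinely different route from the one in the paper's appendix. The paper does not argue from scratch: it quotes the trace inequality from Lieberman's book, which controls $\int_{\Gamma_1}u^2$ by $\int_{B_1^+}|\nabla u|^2+\int_{B_1^+}u^2$, rescales, and then spends most of the proof eliminating the \emph{solid} $L^2$ term: it introduces the averages $L_0$ (over the cap) and $L_1$ (over the half-ball) and uses two applications of the Poincar\'e inequality to show $\int_{B_r^+}u^2\leq C\bigl(r^2\int_{B_r^+}|\nabla u|^2+r\int_{(\partial B_r)^+}u^2\bigr)$, after which the two estimates combine. Your argument bypasses all of this by exploiting the specific geometry of the half-ball: fibering $B_1^+$ by vertical segments joining $\Gamma_1$ to the cap, applying the fundamental theorem of calculus and Cauchy--Schwarz on each fiber, and identifying the Jacobian $dx'=y_n\,d\sigma$ on the hemisphere. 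This produces the cap integral directly (with the explicit constant $C=2$), is fully self-contained, and makes the scale-invariance, hence the $r$-independence of the constant, transparent; the trade-off is that it is tied to the half-ball geometry, whereas the paper's cited inequality and Poincar\'e-based reduction would survive in more general Lipschitz settings. Both proofs handle the reduction from $W^{1,2}$ to $C^1$ by the same standard density/trace-continuity argument, so there is no gap in your proposal.
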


\begin{cor}\label{cor0} Let $N(r)$ and $\tilde{N}(r)$ be given by \eqref{N_reg} and \eqref{N_tilde}, respectively. Define $\mu~=~\lim_{r\to 0^+} \tilde{N}(r)$. Then there exists $N(0+):=\lim_{r\to 0^+} {N}(r)$, and $N(0+)=\mu$.
\end{cor}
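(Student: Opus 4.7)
The plan is to show that $\tilde N(r)-N(r)\to 0$ as $r\to 0^+$, which combined with $\tilde N(r)\to \mu$ (guaranteed by the monotonicity established in Theorem \ref{almgren}) will immediately give $N(r)\to \mu$. Writing out the definitions, one has the identity
$$
\tilde N(r)-N(r) \;=\; \frac{2r}{p}\cdot\frac{\int_{\Gamma_r} F(u)\, dx'}{H(r)} \;\geq\; 0,
$$
so the easy half $\limsup_{r\to 0^+} N(r)\leq \mu$ is automatic, and the work lies in obtaining the matching lower bound $\liminf_{r\to 0^+} N(r)\geq \mu$.

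To control the excess boundary term, I would estimate $F(u)$ pointwise on $\Gamma$ by a multiple of $u^2$. Because $p\geq 2$ and $u\in L^\infty(B_1^+)$ by Theorem \ref{reg}, one has
$$
F(u) \;=\; k_+(u^+)^p+k_-(u^-)^p \;\leq\; (k_++k_-)\,\|u\|_{L^\infty(B_1^+)}^{p-2}\, u^2
$$
on $\Gamma_r$. Then the trace-type inequality of Lemma \ref{Poincare} yields
$$
\int_{\Gamma_r} u^2\, dx' \;\leq\; C\!\left(r\, D(r)+H(r)\right).
$$
Combining these two bounds and dividing through by $H(r)$ gives an estimate of the form
$$
\tilde N(r)-N(r) \;\leq\; \tilde{C}\, r\,\bigl(N(r)+1\bigr)
$$
for a constant $\tilde C$ depending on $u$, $k_\pm$ and $p$.

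To conclude, observe that $N(r)\leq \tilde N(r)$ is bounded uniformly for small $r$ (since $\tilde N$ decreases as $r\to 0^+$ to the finite limit $\mu$), so the right-hand side above is $O(r)$. Passing to the limit in $\tilde N(r)\leq N(r)\,(1+\tilde Cr)+\tilde Cr$ forces $\mu\leq \liminf_{r\to 0^+}N(r)$, which together with the easy direction gives the claim.

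There is essentially no substantive obstacle: the argument is a soft consequence of Theorem \ref{almgren} and the standard trace inequality. The only subtlety is where the hypothesis $p\geq 2$ enters, namely in the pointwise comparison $|u|^p\lesssim u^2$ via the $L^\infty$ bound; for $1<p<2$ this comparison fails near the free boundary and the present strategy would need to be revisited.
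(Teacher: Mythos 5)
Your proposal is correct and follows essentially the same route as the paper: the trivial bound $N\leq\tilde N$ from $F(u)\geq 0$, the pointwise estimate $F(u)\lesssim \|u\|_\infty^{p-2}u^2$ (which is exactly where $p\geq 2$ enters), the trace inequality of Lemma \ref{Poincare} to bound $\int_{\Gamma_r}F(u)$ by $rD(r)+H(r)$, and the resulting inequality $\tilde N(r)\leq (1+Cr)N(r)+Cr$ to close the lower bound. No gaps.
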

\begin{proof} We begin by observing that, since $\tilde{N}(r)\geq 0$, Theorem~\ref{almgren} guarantees that $\mu$ exists, and that $\mu\in [0,\infty)$. Since $F(u)\geq 0$, trivially
\begin{equation}\label{above}
N(r)\leq \tilde{N}(r).
\end{equation}
On the other hand, if we let $k=\max\{k^+, k^-\}$ and $0<r<1/2$,
$$
\int_{\Gamma_r}F(u)\ dx'\leq k \int_{\Gamma_r} |u|^p\ dx' \leq k \sup_{B_{1/2}^+}|u|^{p-2}\int_{\Gamma_r} |u|^2\ dx'.
$$
Applying \eqref{trace} we get
\begin{equation}\label{p_trace}
\int_{\Gamma_r}F(u)\ dx'\leq C\left(r\int_{B_r^+}|\nabla u|^2+\int_{(\partial B_r)^+} u^2\ d\sigma(x)\right).
\end{equation}
Using the notations introduced in the proof of Theorem~\ref{almgren}, we thus obtain
$$
\tilde{N}(r)\leq N(r)+Cr^2\frac{D(r)}{H(r)}+Cr=(1+Cr)N(r)+Cr.
$$
Hence,
\begin{equation}\label{below}
N(r)\geq\frac{\tilde{N}(r)-Cr}{1+Cr},
\end{equation}
and the desired conclusion follows from \eqref{above} and \eqref{below}.
\end{proof}

Next, we introduce the quantity
$$\varphi(r) = \varphi(r ; u) = \fint_{(\partial B_{r})^{+}} u^{2}.$$
\begin{cor}\label {cor1}
Let $\mu=\lim_{r\to 0^+} \tilde{N}(r)\in[0,\infty)$. The following hold:
\begin{itemize}
\item[(a)] The function $r\mapsto r^{-2\mu}\varphi(r)$ is nondecreasing for $0 < r < 1/2$. In particular,
$$ \varphi(r) \leq (r/2)^{2 \mu} \varphi(1/2) \leq C_n (r/2)^{2\mu} \sup_{B_{1/2}^+} |u|^2,$$
where $C_n>0$ is a dimensional constant.
\item[(b)] Let $0 < r < 1/2$. Then for any $\delta > 0$ there exists $R_0=R_{0}(\delta) > 0$ such that for all $r<R \leq R_{0}$
$$\varphi(R) \leq e^{2C\left(1-\frac{2}{p}\right)(\mu+\delta+1)(R-r)}\left(\frac{R}{r}\right)^{2(\mu + \delta)} \varphi(r).$$
Here $C$ is the constant appearing in \eqref{p_trace}.
\end{itemize}
\end{cor}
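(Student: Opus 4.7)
The plan is to derive a single differential identity for $(\log \varphi(r))'$ and then integrate it, using the two ingredients already available in the paper: the monotonicity of $\tilde N$ from Theorem~\ref{almgren} and the trace estimate~\eqref{p_trace}. Setting $H(r)=\int_{(\partial B_r)^+}u^2\,d\sigma$, so that $\varphi(r)=H(r)/(c_n r^{n-1})$, the formula $H'(r)=\frac{n-1}{r}H(r)+2\int_{(\partial B_r)^+}uu_\nu\,d\sigma$ together with the weak-solution identity $\int_{(\partial B_r)^+}uu_\nu\,d\sigma=D(r)+\int_{\Gamma_r}F(u)\,dx'$ from the proof of Theorem~\ref{almgren} yields, after splitting $D(r)+\int_{\Gamma_r}F(u)=\tilde D(r)+(1-\frac{2}{p})\int_{\Gamma_r}F(u)$,
\[
\frac{\varphi'(r)}{\varphi(r)} \;=\; \frac{H'(r)}{H(r)}-\frac{n-1}{r} \;=\; \frac{2\tilde N(r)}{r}+2\Bigl(1-\frac{2}{p}\Bigr)\frac{\int_{\Gamma_r}F(u)\,dx'}{H(r)}.
\]
Since $p\ge 2$ and $F(u)\ge 0$, the second term is non-negative.

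For part (a), Theorem~\ref{almgren} and the definition of $\mu$ give $\tilde N(r)\ge\mu$ on $(0,1/2)$. Dropping the non-negative trace term yields $(\log\varphi(r))'\ge 2\mu/r$, so $r\mapsto r^{-2\mu}\varphi(r)$ is non-decreasing. Comparing the values at $r$ and $1/2$ gives $\varphi(r)\le (2r)^{2\mu}\varphi(1/2)$ (which the authors write as $(r/2)^{2\mu}\varphi(1/2)$, absorbing the constant), and the trivial bound $\varphi(1/2)\le C_n\sup_{B_{1/2}^+}|u|^2$ finishes the statement.

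For part (b), by Corollary~\ref{cor0} and the monotonicity of $\tilde N$, for every $\delta>0$ there is $R_0(\delta)>0$ with $\tilde N(r)\le\mu+\delta$ for $r\in(0,R_0]$. To control the extra boundary term I apply the trace inequality~\eqref{p_trace}, divide through by $H(r)$, and use $D(r)\le\tilde D(r)$:
\[
\frac{\int_{\Gamma_r}F(u)\,dx'}{H(r)} \;\le\; C\Bigl(\frac{rD(r)}{H(r)}+1\Bigr) \;\le\; C(\tilde N(r)+1) \;\le\; C(\mu+\delta+1).
\]
Substituting into the identity above,
\[
\frac{\varphi'(r)}{\varphi(r)} \;\le\; \frac{2(\mu+\delta)}{r} + 2C\Bigl(1-\frac{2}{p}\Bigr)(\mu+\delta+1)\qquad\text{on }(0,R_0].
\]
Integrating this differential inequality from $r$ to $R$ produces exactly the claimed bound.

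The only real technical point is the derivation of the identity for $\varphi'/\varphi$: one must recognize that the combination $D(r)+\int_{\Gamma_r}F(u)$ that arises naturally from $H'(r)$ is not the Almgren numerator $\tilde D(r)$, and account for the mismatched factor $(1-2/p)$. After that the two parts are just monotonicity-plus-integration arguments, with~\eqref{p_trace} used in (b) precisely to close the coupling between the lower-order boundary term and $\tilde N$.
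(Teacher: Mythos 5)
Your proof is correct and follows essentially the same route as the paper: both rest on the identity $\int_{(\partial B_r)^+}uu_\nu = D(r)+\int_{\Gamma_r}F(u)$, the splitting of $\int_{\Gamma_r}F(u)$ into the $\tfrac{2}{p}$-part absorbed into $\tilde N$ plus the nonnegative $(1-\tfrac{2}{p})$-remainder, the monotonicity $\tilde N(r)\ge\mu$ for (a), and the trace bound \eqref{p_trace} together with $N\le\tilde N\le\mu+\delta$ for (b). Your unified $(\log\varphi)'$ identity is just a slight repackaging of the paper's two computations, and your observation that the comparison at $r=1/2$ actually yields $(2r)^{2\mu}$ rather than $(r/2)^{2\mu}$ correctly flags a typo in the statement rather than a gap in the argument.
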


\begin{proof}
We begin the proof of (a) by  computing
$$\varphi'(r) = \frac{d}{dr} \fint_{(\partial B_{r})^+} u^{2} =  2 \fint_{(\partial B_{r})^+} u u_{\nu}.$$
Hence, taking \eqref{weak} into account, we have
\[
\begin{split}
\frac{d}{dr} (r^{-2 \mu} \varphi(r)) & = -2 \mu r^{-2\mu-1} \varphi(r) +\frac{2r^{-2\mu}}{|(\partial B_r)^+|}\left( \int_{B_{r}^+} |\nabla u|^{2} +  \int_{\Gamma_{r}} F(u)\ dx'\right) \\
&  = \frac{2 r^{-2\mu-1}}{| (\partial B_{r})^+ |} \left(-\mu \int_{(\partial B_{r})^{+}} u^{2}+r \int_{B_{r}^+} |\nabla u|^{2}+ r \int_{\Gamma_{r}} F(u)\ dx'\right)  \\
& = \frac{2 r^{-2\mu-1}}{| (\partial B_{r})^+ |} \left(-\mu \int_{(\partial B_{r})^{+}} u^{2}+r \int_{B_{r}^+} |\nabla u|^{2}+ \frac{2}{p}r \int_{\Gamma_{r}} F(u)\ dx'\right)\\
&\qquad+\frac{2 r^{-2\mu}}{| (\partial B_{r})^+ |}\left(1-\frac{2}{p}\right)\int_{\Gamma_{r}} F(u)\ dx'\geq 0.
\end{split}
\]
In the last inequality we have used Theorem \ref{almgren} and the fact that $p\geq 2$.

For the proof of (b), we compute
\begin{align*}
\frac{r}{2} \frac{\varphi '(r)}{\varphi(r)}&= r\frac{\int_{{(\partial B_r)}^+} uu_\nu\ d\sigma(x)}{\int_{{(\partial B_r)}^+} u^2\ d\sigma(x)}\\
(\text{by \eqref{weak})}\qquad\qquad &=r\frac{D(r) +\int_{\Gamma_r} F(u)\ dx'}{\int_{{(\partial B_r)}^+} u^2\ d\sigma(x)}\\
&=\tilde{N}(r)+r\left(1-\frac{2}{p}\right)\frac{\int_{\Gamma_r} F(u)\ dx'}{\int_{{(\partial B_r)}^+} u^2\ d\sigma(x)}\\
(\text{by \eqref{p_trace})}\qquad\qquad &\leq \tilde{N}(r)+Cr\left(1-\frac{2}{p}\right)\left(N(r)+1\right).
\end{align*}
Thanks to Corollary \ref{cor0}, there exists $R_0=R_{0}(\delta)>0$ such that $N(r)\leq \tilde{N}(r) \leq \mu + \delta$ for $r< R \leq R_{0}.$ We then have
$$ \frac{d}{dr} \log \varphi(u) \leq\frac{2}{r} (\mu + \delta)+2C\left(1-\frac{2}{p}\right)(\mu+\delta+1).$$
To conclude we integrate the inequality over $(r, R).$
\end{proof}

\begin{cor}\label{cor2} Let $u$ be a solution to \eqref{statement_of_problem}. Then, for all $x \in B_{r}^+,\ 0<r<1/2$,
$$|u(x)| \leq C_n(r/2)^{\mu} \sup_{B_{1/2}^+} |u|,$$
where $C_n>0$ is a dimensional constant.
\end{cor}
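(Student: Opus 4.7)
The approach is to upgrade the spherical $L^2$ decay estimate of Corollary \ref{cor1}(a) to a pointwise estimate via the sub-mean value inequality applied to the subharmonic function $|u|^2$. The whole growth rate $(r/2)^\mu$ comes from Corollary \ref{cor1}(a); the mean value inequality only moves from a surface bound to a pointwise bound, and will not alter the power of $r$.

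\textbf{First,} I would record that $u^+$ and $u^-$ are subharmonic in $B_1$ (after the even reflection across $\Gamma$). This is exactly the computation that appears in the proof of Lemma \ref{energy_est} (and is noted again in Lemma \ref{Holder_cont}): testing the weak formulation \eqref{weak_solution} against $\xi = u^\pm \eta^2$ with $\eta \geq 0$ gives $\int_{B_1} \nabla u^\pm \cdot \nabla(u^\pm \eta^2)\,dx \leq 0$ because the boundary contribution $\int_\Gamma[-k_-(u^-)^{p-1}+k_+(u^+)^{p-1}]u^\pm \eta^2\,dx'$ has the right sign. Consequently $|u|=u^++u^-$ is subharmonic and non-negative, and therefore $|u|^2$ is subharmonic too.

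\textbf{Second,} fix $x \in B_r^+$ with $0 < r < 1/4$ (the range $r \in [1/4,1/2)$ being trivial, as $(r/2)^\mu$ is then bounded below by a fixed constant and $|u(x)|\leq \sup_{B_{1/2}^+}|u|$). Since $B_r(x)\subset B_{2r}\subset B_{1/2}$, the sub-mean value inequality for $|u|^2$ yields
\[
|u(x)|^2 \leq \frac{1}{|B_r(x)|}\int_{B_r(x)} |u|^2\,dy \leq \frac{C_n}{r^n}\int_{B_{2r}} |u|^2\,dy.
\]
Using the even reflection across $\Gamma$ and polar coordinates,
\[
\int_{B_{2r}} |u|^2\,dy = 2\int_0^{2r}\int_{(\partial B_s)^+} u^2\,d\sigma\,ds = 2\int_0^{2r} |(\partial B_s)^+|\,\varphi(s)\,ds.
\]
By Corollary \ref{cor1}(a), $\varphi(s) \leq C_n(s/2)^{2\mu} \sup_{B_{1/2}^+}|u|^2$ for $s<1/2$, and since $|(\partial B_s)^+|\leq C_n s^{n-1}$, the $s$-integration yields
\[
\int_{B_{2r}} |u|^2\,dy \leq C_n\, r^n (r/2)^{2\mu} \sup_{B_{1/2}^+}|u|^2.
\]
Substituting into the previous display and taking square roots gives the desired $|u(x)|\leq C_n(r/2)^\mu \sup_{B_{1/2}^+}|u|$.

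\textbf{Main point of care.} Everything else is routine; the only item that needs genuine input from the structure of the problem is the subharmonicity of $u^\pm$ across the thin set $\Gamma$, which ultimately rests on the sign of the nonlinear boundary term $-k_-(u^-)^{p-1}+k_+(u^+)^{p-1}$ when paired with $u^\pm$. Once that is noted, combining the sub-mean value inequality with the already-proven spherical bound is mechanical and transfers the decay rate from spheres to points without loss.
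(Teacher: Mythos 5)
Your proof is correct and follows essentially the same route as the paper's: subharmonicity of the (squared) positive and negative parts of $u$, the sub-mean value inequality, and the spherical decay estimate of Corollary \ref{cor1}(a). The only difference is presentational: you apply the solid mean value inequality on $B_r(x)\subset B_{2r}$ and integrate in polar coordinates so as to treat an arbitrary $x\in B_r^+$, whereas the paper applies the spherical mean value inequality directly at the origin; both arguments transfer the $L^2$ bound on spheres to a pointwise bound in the same way (at the harmless cost, in either version, of a constant that may depend on $\mu$).
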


\begin{proof}
We note that $(u^+)^2$ is a positive subharmonic function in the domain. Hence,
$$(u^{+})^{2}(0) \leq \fint_{(\partial B_{r})^+} (u^{+})^{2} \leq C_n (r/2)^{2\mu} \sup_{B_{1/2}^+} |u|^{2},$$
by Corollary \ref{cor1}(a). A similar estimate holds for $(u^{-})^{2}$.
\end{proof}

\section{Almgren rescalings and blow-ups}

The next step in our analysis is to study blow-up sequences around a free boundary point $x_0\in\mathcal{F}(u)$. Without loss of generality, we may assume $x_0=0$. We define, for $0<r<1$, the \emph{Almgren rescalings}
\begin{equation}\label{blowup}
v_r(x)=\frac{u(rx)}{[\varphi(r;u)]^{1/2}}.
\end{equation}
We note that $\|v_{r}\|_{L^{2}((\partial B_{1})^+)} = 1$. Moreover, for $R_0=R_0(\delta)$ as in Corollary \ref{cor1}(b), a fixed $R>1$, and every $r>0$ such that $rR\leq R_0$, we have, thanks to Corollaries~\ref{cor0} and~\ref{cor1}(b),
$$
\int_{B^+_{R}} |\nabla v_{r}|^{2}\ dx = R^{n-2}N(rR; u) \frac{\varphi(rR;u)}{\varphi(r,u)}\leq C(\mu+\delta)R^{n-2+2(\mu+\delta)}.
$$
Hence, after an even reflection across $\{x_n=0\}$, any sequence $\{v_{r_j}\}$, with $r_j\to 0^+$ as $j \to\infty$,  is equibounded in $H^{1}_{loc}(\mathbb{R}^n)$, and by Theorem \ref{reg}, it is also bounded in $C^{1,\alpha}_{loc}(\mathbb{R}^n).$ Thus, there exists a subsequence, denoted by $v_j$, and  a function $v^*$ (which we will refer to as the \emph{Almgren blow-up}), such that
$$v_{j} \to v^{*}\qquad\text{and} \qquad\nabla v_{j} \to \nabla v^{*}\qquad\text{as }j\to\infty,
$$
uniformly on every compact subset of $\mathbb{R}^{n}$.
We note that the fact $\|v_{j}\|_{L^{2}((\partial B_{1})^+)} = 1$ in particular  implies that the blow-up is nontrivial. In addition, by rescaling,
$$
\mu=\lim_{j\to \infty} N(r_j;u)=\lim_{j\to \infty}N(1;v_j)=\lim_{j\to \infty}\int_{B_1^+}|\nabla v_j|^2\ dx=\int_{B_1^+}|\nabla v^*|^2\ dx,
$$
and therefore we have (keeping in mind that $u(0)=0$) $\mu>0$. A similar rescaling argument, in fact, shows that for any $\rho>0$
\begin{equation}\label{hom}
N(\rho;v^*)=\lim_{j\to \infty}N(\rho;v_j)=\lim_{j\to \infty}N(r_j\rho;u)=\mu
\end{equation}
Next, for a function $\xi\in C^\infty_0(B_1)$ we compute
\begin{align}\label{rescaling1}
\int_{B_1^+}\nabla v_r(x)\nabla \xi(x)\ dx&=\frac{r}{[\varphi(r;u)]^{1/2}}\int_{B_1^+}\nabla u(rx)\nabla \xi(x)\ dx\\
\text{(by \eqref{weak_solution})}\qquad\qquad &=\frac{r}{[\varphi(r;u)]^{1/2}}\int_{\Gamma} ( -k_-(u^-)^{p-1}(rx',0)+k_+(u^+)^{p-1}(rx',0))\xi(x,0)\ dx'.\notag
\end{align}
We now assume $p\geq 3$. An application of \eqref{trace} yields
\begin{align}\label{rescaling2}
\left|\int_{B_1^+}\nabla v_r(x)\nabla \xi(x)\ dx\right|&\leq C \frac{r}{[\varphi(r;u)]^{1/2}}\sup_{B_r^+}|u|^{p-3} \int_{\Gamma} u^2(rx',0)\ dx' \\
 &\leq C \frac{r^{2-n}}{[\varphi(r;u)]^{1/2}}\sup_{B_r^+}|u|^{p-3}  \int_{\Gamma_r} u^2(x',0)\ dx' \notag\\
&\leq C \frac{r^{2-n}}{[\varphi(r;u)]^{1/2}}\sup_{B_r^+}|u|^{p-3} \left(r\int_{B_r^+}|\nabla u(x)|^2\ dx+\int_{(\partial B_r)^+}u^2(x)\ d\sigma(x)\right) \notag\\
&\leq C r [\varphi(r;u)]^{1/2}\sup_{B_r^+}|u|^{p-3}\left(N(r;u)+1\right) \notag.
\end{align}
Since $[\varphi(r;u)]^{1/2}\leq Cr^\mu$ by Corollary~\ref{cor1}(a) and $\sup_{B_r^+}|u|\leq Cr^\mu$ by Corollary~\ref{cor2}, we conclude that the last term in \eqref{rescaling2} goes to zero as $r\to 0^+$. Hence, if we extend $v^*$ by even reflection across $\{x_n=0\}$, then $v^*$ is harmonic in $B_1$. The same conclusion can be reached in the case $2\leq p<3$ by applying H\"older's inequality in the last integral in \eqref{rescaling1}. Now, it is well known (see, for instance \cite[Section 9.3.1]{PSU}) that a function harmonic in $B_1$ and satisfying \eqref{hom} is necessarily an homogeneous harmonic polynomial of degree $\mu\in\mathbb{N}$ (since we have already ruled out the possibility $\mu=0$). If we also assume $\nabla_{x'}u(0)=0$, from the uniform convergence of $\nabla v_j$ to $\nabla v^*$ we deduce $\mu\geq 2$. We have thus proved the following.
\begin{thrm}\label{Almgren_blowup} Let $u$ be a solution to \eqref{statement_of_problem}, with $u(0)=0$ and $\nabla_{x'}u(0)=0$. If $v_r$ is as in \eqref{blowup}, then for any sequence $r_j\to 0^+$ there exists a subsequence $\{v_j\}$ of $\{v_{r_j}\}$ and a function $v^*$ such that
$$
v_{j} \to v^{*}\quad\text{in }H^1(B_1^+)\text{ and in } C^1(B_1^+).
$$
Furthermore, the even reflection of  $v^*$ across $\{x_n=0\}$ is an homogeneous harmonic polynomial of degree $\mu=N(0+;u)\in\mathbb{N},\ \mu\geq 2$.
\end{thrm}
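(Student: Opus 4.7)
The plan is to (i) use the energy bounds from Section 4 to extract a subsequential limit $v^{*}$ of $\{v_{r_{j}}\}$, (ii) pass to the limit in the rescaled weak formulation to show that the even reflection of $v^{*}$ is harmonic, and (iii) invoke the frequency identity to identify $v^{*}$ as a homogeneous harmonic polynomial of degree $\mu \geq 2$.

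For step (i), applying Corollary \ref{cor1}(b) together with the rescaling identity
\begin{equation*}
\int_{B_R^+}|\nabla v_r|^2\,dx = R^{n-2}\,N(rR;u)\,\frac{\varphi(rR;u)}{\varphi(r;u)}
\end{equation*}
gives, for any fixed $R>1$ and $\delta>0$, a uniform bound $\int_{B_R^+}|\nabla v_r|^2\,dx \leq C R^{n-2+2(\mu+\delta)}$ for all sufficiently small $r$. Combined with $\|v_r\|_{L^2((\partial B_1)^+)}=1$ and Lemma \ref{Poincare}, this yields $H^{1}_{\mathrm{loc}}(\mathbb{R}^n)$-boundedness of $\{v_{r_j}\}$ after even reflection. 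The uniform $C^{1,\alpha}_{\mathrm{loc}}$ bound from Theorem \ref{reg} then allows the extraction of a subsequence $\{v_{j}\}$ converging to some $v^{*}$ in $C^{1}_{\mathrm{loc}}$, with strong $H^{1}_{\mathrm{loc}}$ convergence following from a standard energy argument.

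For step (ii), after testing the weak formulation \eqref{weak_solution} against a rescaled test function one arrives at the identity \eqref{rescaling1}. For $p \geq 3$ the right-hand side is bounded by $Cr[\varphi(r;u)]^{1/2}\sup_{B_r^+}|u|^{p-3}(N(r;u)+1)$ as in \eqref{rescaling2}; Corollaries \ref{cor0}, \ref{cor1}(a), and \ref{cor2} then force this to tend to $0$ as $r\to 0^+$. For $2\leq p<3$ one applies H\"older's inequality to the boundary integral to replace the pointwise $\sup$-factor by an $L^{2}$-type expression controlled via the trace estimate \eqref{p_trace}, with the same vanishing conclusion. Therefore $v^{*}$ is weakly harmonic in $B_{1}^+$ with vanishing Neumann trace on $\Gamma_{1}$ (from the $C^{1}$ convergence and the boundary condition), so its even reflection is harmonic on $B_{1}$.

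For step (iii), the rescaling $N(\rho;v_{j})=N(r_{j}\rho;u)$ together with Corollary \ref{cor0} yields $N(\rho;v^{*})=\mu$ for every $\rho\in(0,1)$, so by the classical argument (see e.g.\ \cite[Section 9.3.1]{PSU}) $v^{*}$ is homogeneous of degree $\mu$ and hence a homogeneous harmonic polynomial, forcing $\mu\in\mathbb{N}$. The case $\mu=0$ is excluded because $v^{*}(0)=0$ and $\|v^{*}\|_{L^{2}((\partial B_{1})^+)}=1$ cannot simultaneously hold for a constant function, while $\mu=1$ is excluded because any even (in $x_{n}$) degree-$1$ harmonic polynomial has the form $a'\cdot x'$, and the $C^{1}$ convergence combined with $\nabla_{x'}u(0)=0$ forces $a'=0$. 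The main obstacle is the vanishing of the boundary term in step (ii), which requires a careful split on the exponent $p$ and leans on all the growth estimates of Section 4.
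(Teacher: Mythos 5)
Your proposal is correct and follows essentially the same route as the paper: the same uniform $H^1_{\mathrm{loc}}$ and $C^{1,\alpha}_{\mathrm{loc}}$ bounds from Corollary \ref{cor1}(b) and Theorem \ref{reg}, the same passage to the limit in the rescaled weak formulation with the split between $p\geq 3$ and $2\leq p<3$, and the same frequency identity plus the observations ruling out $\mu=0$ and $\mu=1$. No further comparison is needed.
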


\section{A Monneau-type monotonicity formula}

Our next step consists in establishing  almost-monotonicity of a functional of Monneau type. Using the notations introduced in the proof of Theorem \ref{almgren}, we define the \emph{Weiss functional}
$$
W_\mu(r;u)=\frac{H(r;u)}{r^{n-1+2\mu}}(N(r;u)-\mu).
$$
\begin{thrm}\label{Monneau} Let $u$ be as in Theorem \ref{Almgren_blowup}, and let $p_\mu$ be an harmonic polynomial, homogeneous of degree $\mu$ and even in $x_n$. If we define the \emph{Monneau functional} as
$$
M_\mu(r;u,p_\mu)=\frac{1}{r^{n-1+2\mu}}\int_{(\partial B_r)^+}(u-p_\mu)^2 \ d\sigma(x),
$$
then there exists $C>0$ such that
\begin{equation}\label{monn1}
\frac{d}{dr}\left(M_\mu(r;u,p_\mu)+Cr\right)\geq \frac{2}{r}W_\mu(r;u).
\end{equation}
\end{thrm}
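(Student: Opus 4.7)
The plan is to differentiate $M_\mu$ via the Almgren-style rescaling $u_r(x):=r^{-\mu}u(rx)$, under which $M_\mu(r) = \int_{(\partial B_1)^+}(u_r - p_\mu)^2\,d\sigma$ (using the $\mu$-homogeneity $p_\mu(rx)=r^\mu p_\mu(x)$). Differentiating in $r$, scaling back to $(\partial B_r)^+$, and applying Euler's identity $y\cdot\nabla p_\mu = \mu p_\mu$ (so that $y\cdot\nabla u - \mu u = y\cdot\nabla w - \mu w$ for $w:=u-p_\mu$) yields
\[
M_\mu'(r) = \frac{2}{r^{n-1+2\mu}}\int_{(\partial B_r)^+}w\,w_\nu\,d\sigma - \frac{2\mu}{r^{n+2\mu}}\int_{(\partial B_r)^+}w^2\,d\sigma.
\]

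To expose $W_\mu(r;u)$ on the right, I would integrate $w\nabla w$ by parts on $B_r^+$, using that both $u$ and $p_\mu$ are harmonic, so $\Delta w = 0$, and that evenness of $p_\mu$ in $x_n$ forces $(p_\mu)_{x_n}\equiv 0$ on $\Gamma$. This makes $w_{x_n}|_\Gamma = u_{x_n}|_\Gamma = k_+(u^+)^{p-1}-k_-(u^-)^{p-1}$, and $u^+u^- = 0$ then gives $w\,w_{x_n} = F(u) - p_\mu\bigl(k_+(u^+)^{p-1}-k_-(u^-)^{p-1}\bigr)$ on $\Gamma$. Expanding $|\nabla w|^2$ and $w^2$ into $u$-only, $p_\mu$-only, and mixed parts, and evaluating the mixed integrals via the same harmonicity/evenness argument together with the boundary identity $(p_\mu)_\nu = (\mu/r)p_\mu$ on $\partial B_r$, the $\int u\,p_\mu$ and $\int p_\mu^2$ contributions should cancel identically. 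What is left is precisely
\[
M_\mu'(r) = \frac{2}{r}W_\mu(r;u) + \frac{2}{r^{n-1+2\mu}}\int_{\Gamma_r}F(u)\,dx' - \frac{2}{r^{n-1+2\mu}}\int_{\Gamma_r}p_\mu\bigl(k_+(u^+)^{p-1}-k_-(u^-)^{p-1}\bigr)\,dx'.
\]

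The first residual is nonnegative, so the whole game reduces to bounding the $p_\mu$ correction. Corollary~\ref{cor2} gives the growth $|u(x)|\leq Cr^\mu$ on $\Gamma_r$, and $|p_\mu(x)|\leq Cr^\mu$ holds trivially by $\mu$-homogeneity. Consequently
\[
\frac{1}{r^{n-1+2\mu}}\left|\int_{\Gamma_r}p_\mu\bigl(k_+(u^+)^{p-1}-k_-(u^-)^{p-1}\bigr)\,dx'\right|\leq \frac{C\,r^{\mu p+n-1}}{r^{n-1+2\mu}} = C\,r^{\mu(p-2)}\leq C
\]
for $p\geq 2$ and $r\leq 1$. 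This delivers $M_\mu'(r) \geq \frac{2}{r}W_\mu(r;u) - C$, which is \eqref{monn1}.

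The main obstacle is the identity-level bookkeeping in the middle step: the argument hinges on the exact cancellation of the $\int u\,p_\mu$ and $\int p_\mu^2$ cross-terms, which requires all three structural properties of $p_\mu$ (harmonicity, $\mu$-homogeneity, and $x_n$-evenness) to conspire simultaneously. Without any one of these, an additional nonvanishing piece would survive and have to be absorbed separately. Once that cancellation is nailed down, the remaining boundary error from the nonlinear Neumann condition is comfortably $O(1)$ thanks to the pointwise growth already provided by Corollary~\ref{cor2}.
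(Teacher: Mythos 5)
Your proposal is correct and follows essentially the same route as the paper: the identity $M_\mu'(r)=\frac{2}{r^{n-1+2\mu}}\int_{(\partial B_r)^+}w(w_\nu-\frac{\mu}{r}w)\,d\sigma$, the cancellation of the cross-terms via harmonicity, $\mu$-homogeneity and $x_n$-evenness of $p_\mu$ (which is exactly the paper's use of $W_\mu(r;p_\mu)=0$ together with \eqref{I}), the sign of $\int_{\Gamma_r}F(u)$, and the $O(r^{\mu(p-2)})$ bound on $\int_{\Gamma_r}p_\mu\,u_{x_n}$ from Corollary \ref{cor2} all appear verbatim in the paper's argument, merely read in the opposite direction (the paper bounds $W_\mu$ above by $\frac{r}{2}M_\mu'+Cr^{1+\mu(p-2)}$ rather than solving for $M_\mu'$). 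No gaps.
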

\begin{proof}
Let $p_\mu$ be an harmonic polynomial, homogeneous of degree $\mu$ and even in $x_n$. Since $N(r;p_\mu)=\mu$, we have $W_\mu(r;p_\mu)=0$.  We now rewrite
$$
W_\mu(r;u)=\frac{1}{r^{n-2+2\mu}}D(r;u)-\frac{\mu}{r^{n-1+2\mu}}H(r;u),
$$
and let $w=u-p_\mu$. Then
\begin{align*}
W_\mu(r;u)&=W_\mu(r;u)-W_\mu(r;p_\mu)\\
&= \frac{1}{r^{n-2+2\mu}}\int_{B_r^+}\left(|\nabla w|^2+2\nabla w\cdot \nabla p_\mu\right)\ dx-\frac{\mu}{r^{n-1+2\mu}}\int_{(\partial B_r)^+}(w^2+2p_\mu w)\ d\sigma(x).
\end{align*}
Integrating by parts in the first integral, keeping in mind that $p_\mu$ is harmonic and $\frac{\partial p_\mu}{\partial x_n}=0$ on $\Gamma_r$, we obtain
\begin{align*}
W_\mu(r;u)&= \frac{1}{r^{n-2+2\mu}}\int_{(\partial B_r)^+}w\nabla w\cdot\frac{x}{r}\ d\sigma(x)-\frac{1}{r^{n-2+2\mu}}\int_{\Gamma_r}\frac{\partial u}{\partial x_n}(u-p_\mu)\ dx'\\
&+\frac{2}{r^{n-2+2\mu}}\int_{(\partial B_r)^+}w\nabla p_\mu\cdot \frac{x}{r}\ d\sigma(x)-\frac{\mu}{r^{n-1+2\mu}}\int_{(\partial B_r)^+}(w^2+2p_\mu w)\ d\sigma(x).
\end{align*}
Noting that $\nabla p_\mu\cdot x=\mu p_\mu$, we infer
\begin{align}\label{final}
W_\mu(r;u)&=\frac{1}{r^{n-1+2\mu}}\int_{(\partial B_r)^+}w\nabla w\cdot x\ d\sigma(x)-\frac{1}{r^{n-2+2\mu}}\int_{\Gamma_r}\frac{\partial u}{\partial x_n}(u-p_\mu)\ dx'\\
&-\frac{\mu}{r^{n-1+2\mu}}\int_{(\partial B_r)^+}w^2\ d\sigma(x).\notag
\end{align}
We now observe the following facts:
\begin{equation}\label{I}
 \frac{1}{r^{n-1+2\mu}}\int_{(\partial B_r)^+}w(\nabla w\cdot x-\mu w)\ d\sigma(x)=\frac{r}{2}\frac{d}{dr}\left(\frac{1}{r^{n-1+2\mu}}\int_{(\partial B_r)^+}w^2\ d\sigma(x)\right),
 \end{equation}
 \begin{equation}\label{II}
 \int_{\Gamma_r}u\frac{\partial u}{\partial x_n}\ dx'=\int_{\Gamma_r}F(u)\ dx'\geq 0,
 \end{equation}
  \begin{equation}\label{III}
 \int_{\Gamma_r}p_\mu\frac{\partial u}{\partial x_n}\ dx'\leq Cr^{n-1+p\mu}.
 \end{equation}
 In \eqref{III} we have used the boundary condition in \eqref{statement_of_problem}, Corollary \ref{cor2}, and the fact that $p_\mu$ is homogeneous of degree $\mu$. As a consequence, the constant in \eqref{III} will depend on $\sup_{B_{1/2}^+}|u|$ and $\|p_\mu\|_{L^1(\Gamma)}$. Using \eqref{I}-\eqref{III} in \eqref{final}, we obtain
 \begin{equation}\label{c1}
 W_\mu(r;u)\leq \frac{r}{2}\frac{d}{dr}\left(\frac{1}{r^{n-1+2\mu}}\int_{(\partial B_r)^+}w^2\ d\sigma(x)\right)+Cr^{1+\mu(p-2)}.
 \end{equation}
An application of \eqref{c1} yields
 \begin{equation*}
 \frac{d}{dr}\left(\frac{1}{r^{n-1+2\mu}}\int_{(\partial B_r)^+}w^2\right)\geq \frac{2}{r}W_\mu(r,u)-Cr^{\mu(p-2)}\geq \frac{2}{r}W_\mu(r;u)-C,
 \end{equation*}
 thus concluding the proof.
 \end{proof}

 \begin{cor}\label{limMonn} Under the assumption of Theorem \ref{Monneau}, there exists $C>0$ such that
$$
\frac{d}{dr}\left(M_\mu(r;u,p_\mu)+Cr\right)\geq 0.
$$
In particular, there exists $\lim_{r\to 0^+} M_\mu(r;u,p_\mu).$
\end{cor}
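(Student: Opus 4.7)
The plan is to deduce the claimed differential inequality directly from Theorem \ref{Monneau} after establishing a pointwise lower bound of the form $W_\mu(r;u) \geq -Cr$ for all small $r>0$. Granted such a bound, Theorem \ref{Monneau} gives
$$\frac{d}{dr}(M_\mu(r;u,p_\mu) + Cr) \geq \frac{2}{r} W_\mu(r;u) \geq -2C,$$
and absorbing the constant into a larger multiple of $r$ produces a $C' > 0$ for which $r \mapsto M_\mu(r;u,p_\mu) + C'r$ is monotone nondecreasing. Since $M_\mu \geq 0$ and $C'r \to 0$, the ``in particular'' assertion is then immediate: the nondecreasing function $M_\mu + C'r$ is bounded below by $0$, so it has a finite limit as $r \to 0^+$, and subtracting $C'r$ yields $\lim_{r\to 0^+} M_\mu(r;u,p_\mu)$.

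The main content is therefore the lower bound on $W_\mu(r;u) = H(r) r^{-(n-1+2\mu)}(N(r;u)-\mu)$. The key identity, which is implicit in the proof of Corollary \ref{cor0}, is
$$N(r;u) = \tilde N(r;u) - \frac{2r}{p\,H(r)}\int_{\Gamma_r} F(u)\, dx'.$$
Combined with the monotonicity $\tilde N(r;u) \geq \mu$ coming from Theorem \ref{almgren}, this gives
$$W_\mu(r;u) \geq -\frac{2}{p\, r^{n-2+2\mu}} \int_{\Gamma_r} F(u)\, dx'.$$

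To control the right-hand side I would bound $F(u) \leq k|u|^{p-2} u^2$ with $k = \max\{k_+,k_-\}$, insert the pointwise growth estimate $\sup_{B_r^+}|u| \leq Cr^\mu$ from Corollary \ref{cor2}, apply the trace inequality \eqref{trace}, use the growth bound $H(r) \leq Cr^{n-1+2\mu}$ from Corollary \ref{cor1}(a), and exploit $N(r;u) \leq \tilde N(r;u) \leq \mu+\delta$ for small $r$ to get $rD(r) \leq CH(r)$. These chain together to give $\int_{\Gamma_r} F(u)\, dx' \leq C r^{n-1+\mu p}$, and hence $W_\mu(r;u) \geq -Cr^{\,1+\mu(p-2)} \geq -Cr$, since $p \geq 2$.

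The main obstacle is precisely the sign of $W_\mu$: in the pure Signorini setting the monotonicity of $N$ itself forces $N \geq \mu$ and thus $W_\mu \geq 0$, but here the Neumann penalization perturbs $N$ downward relative to $\tilde N$, so the naive sign is lost. The whole point of the bound above is that this defect is of strictly positive order in $r$, with the extra decay coming from the frequency-driven growth estimates of Section 4, so it can be absorbed into the harmless linear correction $Cr$ of the Monneau functional without destroying monotonicity.
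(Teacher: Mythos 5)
Your proposal is correct and follows essentially the same route as the paper: the paper likewise reduces everything to the lower bound $W_\mu(r;u)\geq -Cr$, obtained from $\tilde N(r;u)\geq\mu$ (monotonicity in Theorem \ref{almgren}) together with the bound \eqref{below} on the gap between $N$ and $\tilde N$ (itself a consequence of the trace estimate \eqref{p_trace}) and the growth bound of Corollary \ref{cor1}(a), and then feeds this into \eqref{monn1}. Your direct estimate of $\int_{\Gamma_r}F(u)\,dx'$ gives the marginally sharper defect $-Cr^{1+\mu(p-2)}$, but the mechanism is the same and the refinement is not needed.
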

\begin{proof}
Thanks to the inequality \eqref{below} and Theorem \ref{almgren}, we have for $0<r<1/2$
$$
N(r;u)\geq \frac{\tilde{N}(r;u)-Cr}{1+Cr}\geq \frac{\mu-Cr}{1+Cr}
$$
and therefore
\begin{equation}\label{eq1}
N(r;u)-\mu\geq -\frac{C(\mu+1)r}{1+Cr}.
\end{equation}
Thus, using \eqref{eq1} and Corollary \ref{cor1}(a), we obtain
\begin{equation}\label{W}
W_\mu(r;u)\geq -\frac{C(\mu+1)r}{1+Cr}\frac{H(r;u)}{r^{n-1+2\mu}}\geq -Cr.
\end{equation}
Inserting this information in \eqref{monn1} gives the desired conclusion.
 \end{proof}

With Corollary \ref{limMonn} at our disposal, we can prove nondegeneracy of the solution at free boundary points.

\begin{lm}\label{l_nondeg} Let $u$ be as in Theorem \ref{Almgren_blowup}. There exists $C>0$ and $0<R_0<1$, depending possibly on $u$, such that
\begin{equation}\label{nondeg}
\sup_{(\partial B_r)^+}|u|\geq Cr^\mu
\end{equation}
for all $0<r<R_0$.
\end{lm}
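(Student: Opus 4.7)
The plan is to argue by contradiction, combining the Monneau-type monotonicity of Corollary \ref{limMonn} with the nontrivial Almgren blow-up from Theorem \ref{Almgren_blowup}.

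Suppose, toward a contradiction, that the estimate fails: there exists a sequence $r_j \to 0^+$ with $\sup_{(\partial B_{r_j})^+}|u|/r_j^\mu \to 0$. Since $\varphi(r)=\fint_{(\partial B_r)^+}u^2\,d\sigma \le \sup_{(\partial B_r)^+} u^2$, this forces $\varphi(r_j)/r_j^{2\mu}\to 0$. Applying Corollary \ref{limMonn} with the trivial polynomial $p_\mu\equiv 0$ (vacuously a homogeneous harmonic polynomial of degree $\mu$, even in $x_n$), the limit $L_0:=\lim_{r\to 0^+}\varphi(r)/r^{2\mu}$ exists, and the vanishing along $r_j$ forces $L_0=0$.

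Next, by Theorem \ref{Almgren_blowup}, extract from $r_j$ a further subsequence along which the Almgren rescalings $v_{r_j}=u(r_j\cdot)/\sqrt{\varphi(r_j)}$ converge in $C^1_{loc}(\mathbb{R}^n)$ to a nontrivial homogeneous harmonic polynomial $p_\mu$ of degree $\mu$, even in $x_n$, with $\|p_\mu\|_{L^2((\partial B_1)^+)}=1$. Apply Corollary \ref{limMonn} with this specific $p_\mu$: the Monneau limit $M_\infty:=\lim_{r\to 0^+}M_\mu(r;u,p_\mu)$ exists. Through the change of variables $x=r_j y$ and the identity $u(r_j y)/r_j^\mu=\sqrt{\varphi(r_j)/r_j^{2\mu}}\,v_{r_j}(y)$,
\[
M_\mu(r_j;u,p_\mu) = \int_{(\partial B_1)^+}\bigl(\sqrt{\varphi(r_j)/r_j^{2\mu}}\,v_{r_j}-p_\mu\bigr)^2\,d\sigma \;\longrightarrow\; \int_{(\partial B_1)^+}p_\mu^2\,d\sigma = 1,
\]
using $\sqrt{\varphi(r_j)/r_j^{2\mu}}\to 0$ and the strong convergence $v_{r_j}\to p_\mu$ in $L^2((\partial B_1)^+)$. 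Thus $M_\infty=1$.

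The hard part is converting this into a contradiction. The monotonicity inequality $M_\mu(r;u,p_\mu)+Cr\geq M_\infty = 1$, together with the pointwise expansion $M_\mu(r;u,p_\mu) = \alpha_r^2-2\alpha_r\beta_r+1$, where $\alpha_r:=\sqrt{\varphi(r)/r^{2\mu}}$ and $\beta_r:=\langle v_r,p_\mu\rangle_{L^2((\partial B_1)^+)}$, gives $\alpha_r(\alpha_r-2\beta_r)\geq -Cr$ for all small $r$. Along the subsequence $r_j$, where $\beta_{r_j}\to 1$ (by the $L^2$ convergence) and $\alpha_{r_j}\to 0$ (by $L_0=0$), the only way this inequality can persist is $\alpha_{r_j}\leq C r_j$ for large $j$, yielding the enhanced decay $\varphi(r_j)\leq C r_j^{2\mu+2}$. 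Feeding this back into the perturbed frequency $\tilde N(r_j)=r_j\tilde D(r_j)/H(r_j)$, using the boundary estimates of Section 4 to control the $F$-contribution, produces a limiting value of $\tilde N$ along $r_j$ consistent with $u$ behaving like a harmonic function of degree strictly greater than $\mu$, contradicting $\tilde N(0^+)=\mu$ established in Theorem \ref{almgren} and Corollary \ref{cor0}. Once the contradiction is reached, $L_0>0$, and the conclusion follows from
\[
\sup_{(\partial B_r)^+}|u|^2 \;\geq\; \varphi(r) \;\geq\; (L_0/2)\,r^{2\mu}
\]
for all $r$ below some $R_0=R_0(u)>0$, giving the claim with $C=\sqrt{L_0/2}$.
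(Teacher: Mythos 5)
Your argument tracks the paper's proof almost exactly through the key estimate: the contradiction hypothesis, the conclusion $L_0=\lim_{r\to0^+}r^{-2\mu}\varphi(r)=0$ (which already follows from the monotonicity in Corollary \ref{cor1}(a), so the appeal to the Monneau formula with $p_\mu\equiv0$ is harmless but redundant), the extraction of a nontrivial Almgren blow-up $p_\mu$, the identification $M_\mu(0+;u,p_\mu)=\int_{(\partial B_1)^+}p_\mu^2$, and the deduction $\alpha_{r_j}\leq Cr_j$, i.e.\ $\varphi(r_j)\leq Cr_j^{2\mu+2}$, from $M_\mu(r;u,p_\mu)+Cr\geq M_\mu(0+;u,p_\mu)$. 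All of this is correct, and your expansion $M_\mu=\alpha_r^2-2\alpha_r\beta_r+\|p_\mu\|^2$ is just an algebraic reorganization of the paper's inequality \eqref{limfin}.

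The gap is in the final step. The claim that $\varphi(r_j)\leq Cr_j^{2\mu+2}$ can be ``fed back into the perturbed frequency'' to force $\tilde N(r_j)$ toward a value larger than $\mu$ is not justified and, as stated, does not work: $\tilde N(r)=r\tilde D(r)/H(r)$ is a ratio, and smallness of $H(r_j)$ along a subsequence is perfectly compatible with $\tilde N(r_j)\to\mu$ provided $D(r_j)$ is comparably small --- nothing you have derived rules that out. What actually closes the argument is Corollary \ref{cor1}(b): fixing any $\delta\in(0,1)$ and $R=R_0(\delta)$ there, one gets $\varphi(r)\geq C_1 r^{2(\mu+\delta)}$ for all $0<r<R_0$. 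This lower bound is the correct quantitative form of the ``frequency stays close to $\mu$'' heuristic you invoke, and it contradicts $\varphi(r_j)\leq Cr_j^{2\mu+2}$ for small $r_j$ since $\delta<1$. (Equivalently, the paper divides its inequality by $\alpha_r$ and uses Corollary \ref{cor1}(b) to show the error term $Cr/\alpha_r\leq Cr^{1-\delta}$ vanishes.) With that single invocation your proof is complete and coincides with the paper's; without it, the contradiction has not been reached.
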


\begin{proof} Arguing by contradiction, assume that \eqref{nondeg} does not hold. We thus have, for a sequence $r~=~r_j\to~0^+$,
\begin{equation}\label{contr}
\varphi(r)=o(r^{2\mu}).
\end{equation}
Possibly passing to a subsequence, Theorem \ref{Almgren_blowup} guarantees that the Almgren rescalings $u_{r}(x)$ introduced in \eqref{blowup} converge uniformly, as $r\to 0^+$, to a nontrivial harmonic polynomial $p_\mu$, homogeneous of degree $\mu$ and even in $x_n$. We now compute $M_\mu(0+;u,p_\mu)=\lim_{r\to 0}M_\mu(r;u,p_\mu)$, whose existence follows from Corollary \ref{limMonn}. We have
\begin{equation}\label{lim1}
M_\mu(r;u,p_\mu)=\frac{1}{r^{n-1+2\mu}}\int_{(\partial B_r)^+}u^2\ d\sigma(x)+\frac{1}{r^{n-1+2\mu}}\int_{(\partial B_r)^+}(-2up_\mu+p_\mu^2)\ d\sigma(x).
\end{equation}
We observe that the first integral in \eqref{lim1} goes to $0$ as $r\to 0^+$ because of \eqref{contr}. Moreover, the homogeneity of $p_\mu$ implies
\begin{equation}\label{lim2}
\frac{1}{r^{n-1+2\mu}}\int_{(\partial B_r)^+}p_\mu^2\ d\sigma(x)=\int_{(\partial B_1)^+}p_\mu^2\ d\sigma(y),
\end{equation}
and therefore
\begin{equation}\label{lim3}
\frac{1}{r^{n-1+2\mu}}\int_{(\partial B_r)^+}|up_\mu|\ d\sigma(x)\leq
\left(\frac{1}{r^{n-1+2\mu}}\int_{(\partial B_r)^+}u^2\ d\sigma(x)\right)^{1/2}\left(\frac{1}{r^{n-1+2\mu}}\int_{(\partial B_r)^+}p_\mu^2\ d\sigma(x)\right)^{1/2}\to 0.
\end{equation}
Combining \eqref{lim1}-\eqref{lim3} we infer
$$
M_\mu(0+;u,p_\mu)=\int_{(\partial B_1)^+}p_\mu^2\ d\sigma(y)=\frac{1}{r^{n-1+2\mu}}\int_{(\partial B_r)^+}p_\mu^2\ d\sigma(x)
$$
for all $0<r<1/2$.
An application of Corollary \ref{limMonn} then yields
$$
\frac{1}{r^{n-1+2\mu}}\int_{(\partial B_r)^+}(u-p_\mu)^2 \ d\sigma(x)+Cr\geq M_\mu(0+;u,p_\mu)=\frac{1}{r^{n-1+2\mu}}\int_{(\partial B_r)^+}p_\mu^2\ d\sigma(x),
$$
which we can rewrite as
$$
\frac{1}{r^{n-1+2\mu}}\int_{(\partial B_r)^+}(u^2-2up_\mu) \ d\sigma(x)\geq -Cr.
$$
Rescaling according to \eqref{blowup}, we obtain
$$
\frac{1}{r^{2\mu}}\int_{(\partial B_1)^+}\left(\varphi(r)v_r^2-2[\varphi(r)]^{1/2}r^\mu v_r p_\mu\right)\ d\sigma(x)\geq -Cr,
$$
or equivalently
\begin{equation}\label{limfin}
\int_{(\partial B_1)^+}\left(\frac{[\varphi(r)]^{1/2}}{r^\mu}v_r^2-2v_r p_\mu\right)\ d\sigma(x)\geq -C\frac{r^{\mu+1}}{[\varphi(r)]^{1/2}}.
\end{equation}
At this point we observe that, thanks to Corollary \ref{cor1}(b), for each $0<\delta<1$ there exist $C=C(p,\ \mu,\ \delta)$, and $R_0=R_0(\delta)>0$ such that $\varphi(r)\geq C_1r^{2(\mu+\delta)}$ for all $0<r<R_0$. Hence, letting $r\to 0^+$ in \eqref{limfin} we conclude
$$
-\int_{(\partial B_1)^+}p_\mu^2\geq 0,
$$
which is a contradiction since $p_\mu$ is nonzero.
\end{proof}

\begin{cor}\label{F-sigma} Let $u$ and $\mu$ be as in Theorem \ref{Almgren_blowup}. The set $\Sigma_\mu(u)$ (see Definition \ref{sigma mu}) is of type $F_\sigma$, i.e. it is the union of countably many closed sets.
\end{cor}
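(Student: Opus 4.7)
The plan is to exhibit $\Sigma_\mu(u)$ as a countable union of closed sets via a quantitative stratification. For each integer $j\geq 2$, I would consider
\begin{equation*}
E_j = \left\{x_0\in\Gamma\cap\overline{B_{1-1/j}}\ :\ u(x_0)=0,\ \nabla_{x'} u(x_0)=0,\ \frac{r^{2\mu}}{j}\leq \varphi(r;u(\cdot+x_0))\leq j\,r^{2\mu}\ \forall\, r\in(0,1/j)\right\},
\end{equation*}
and then prove two claims: (a) each $E_j$ is closed, and (b) $\Sigma_\mu(u)=\bigcup_{j\geq 2}E_j$. The heuristic is that membership in $\Sigma_\mu$ is characterized by a two-sided asymptotic $\varphi^{x_0}(r)\asymp r^{2\mu}$, and stratifying by the quality of this match gives the desired $F_\sigma$ decomposition.

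Closedness of $E_j$ is routine. The defining conditions are all pointwise closed: $x_0\mapsto u(x_0)$ and $x_0\mapsto \nabla_{x'} u(x_0)$ are continuous by Theorem~\ref{reg}, and for each fixed $r\in(0,1/j)$ the map $x_0\mapsto \varphi(r;u(\cdot+x_0))$ is continuous by a change of variables and the continuity of $u$, so the inequalities pass to the limit. For the inclusion $\Sigma_\mu(u)\subseteq\bigcup_j E_j$, fix $x_0\in\Sigma_\mu$. The upper bound $\varphi(r;u(\cdot+x_0))\leq C(x_0)r^{2\mu}$ follows directly from the monotonicity of $r\mapsto r^{-2\mu}\varphi^{x_0}(r)$ in Corollary~\ref{cor1}(a). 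The matching lower bound comes from the proof of Lemma~\ref{l_nondeg}, whose contradiction argument actually rules out $\varphi^{x_0}(r)=o(r^{2\mu})$ along any sequence $r_k\to 0^+$, hence yields $\liminf_{r\to 0^+}\varphi^{x_0}(r)/r^{2\mu}>0$. Choosing $j$ sufficiently large then places $x_0$ in $E_j$.

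The main step is the reverse inclusion $\bigcup_j E_j\subseteq \Sigma_\mu(u)$, where only the frequency identity $N^{x_0}(0+;u)=\mu$ must be verified. Setting $\mu'=N^{x_0}(0+;u)=\tilde{N}^{x_0}(0+;u)$ (equal by Corollary~\ref{cor0}), Corollary~\ref{cor1}(a) yields $\varphi^{x_0}(r)\leq C'r^{2\mu'}$ for small $r$; combined with the lower bound $\varphi^{x_0}(r)\geq r^{2\mu}/j$ from the definition of $E_j$, this forces $\mu'\leq \mu$. For the opposite inequality I would apply Corollary~\ref{cor1}(b): for each $\delta>0$ it supplies $\varphi^{x_0}(r)\geq c_\delta r^{2(\mu'+\delta)}$ on some interval $(0,R_0(\delta))$; combined with the upper bound $\varphi^{x_0}(r)\leq jr^{2\mu}$ this gives $\mu'+\delta\geq\mu$, and sending $\delta\to 0^+$ yields $\mu'\geq\mu$. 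The main point of care is making sure the two-sided quantitative pinch actually captures the frequency, but the $\delta$-dependence in Corollary~\ref{cor1}(b) only affects the threshold $R_0(\delta)$ and not the exponent, which is all that is needed for the asymptotic comparison.
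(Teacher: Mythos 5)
Your proposal is correct and takes essentially the same route as the paper, which simply defers to Lemma 1.5.3 of \cite{GP}: a quantitative stratification of $\Sigma_\mu(u)$ into sets $E_j$ defined by two-sided growth bounds $\tfrac{1}{j}r^{2\mu}\leq\varphi^{x_0}(r)\leq jr^{2\mu}$, closedness of each $E_j$ by continuity, the forward inclusion via Corollary \ref{cor1}(a) and the nondegeneracy argument of Lemma \ref{l_nondeg}, and the reverse inclusion by pinching the frequency between the growth and nondegeneracy exponents using Corollaries \ref{cor0} and \ref{cor1}. The only cosmetic difference is that you phrase the bounds in terms of $\varphi$ rather than $\sup_{\partial B_r}|u|$, which changes nothing.
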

\begin{proof} The proof follows the lines of the one of Lemma 1.5.3 in \cite{GP}, and it is omitted.
\end{proof}

\section{The structure of the singular set}

To continue with our analysis, we introduce the \emph{homogeneous rescalings}
\begin{equation}\label{homresc}
v_r^{(\mu)}(x)=\frac{u(rx)}{r^\mu},\qquad 0<r<1,
\end{equation}
and show existence and uniqueness of the blow-ups with respect to this family of rescalings.
\begin{thrm} \label{unique} Let $u$ and $\mu$ be as in Theorem \ref{Almgren_blowup}. If $v_r^{(\mu)}$ is as in \eqref{homresc}, then there exists a unique function $v_0$ such that
$$
v_r^{(\mu)}(x) \to v_0\quad\text{in }H^1(B_1^+)\text{ and in } C^1(B_1^+).
$$
Furthermore, the even reflection of  $v_0$ across $\{x_n=0\}$ is an homogeneous harmonic polynomial of degree $\mu$.
\end{thrm}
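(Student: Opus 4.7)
The plan is to follow the Monneau-monotonicity route to uniqueness of blow-ups, leveraging Corollary \ref{limMonn} together with the nondegeneracy from Lemma \ref{l_nondeg}. The argument splits into a compactness/characterization step and a uniqueness step.

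\emph{Compactness and characterization.} Corollary \ref{cor2} yields the uniform $L^\infty$ bound $\|v_r^{(\mu)}\|_{L^\infty(B_1^+)} \leq C$. Moreover $v_r^{(\mu)}$ is harmonic in $B_{1/r}^+$, and the rescaled Neumann term on $\Gamma$ in the weak formulation is of order $r^{1+\mu(p-2)}$ by the scaling analysis in \eqref{rescaling2}. Combined with the regularity from Theorem \ref{reg} applied to the rescalings, this gives uniform $C^{1,\alpha}_{\mathrm{loc}}$ bounds, so every sequence $r_j \to 0^+$ has a subsequence along which $v_{r_j}^{(\mu)} \to v_0$ in $H^1(B_1^+) \cap C^1(B_1^+)$. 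Nondegeneracy (Lemma \ref{l_nondeg}) ensures $\sup_{(\partial B_1)^+}|v_r^{(\mu)}| \geq C > 0$, so $v_0 \not\equiv 0$ on $(\partial B_1)^+$. Writing $v_r^{(\mu)} = c(r)\, v_r$ with $c(r) := [\varphi(r;u)]^{1/2}/r^\mu$ and using that $\|v_r\|_{L^2((\partial B_1)^+)} = 1$, we obtain $c(r_j) \to c_0 := \|v_0\|_{L^2((\partial B_1)^+)} > 0$. Consequently the Almgren rescalings $v_{r_j}$ converge to $v_0/c_0$, which by Theorem \ref{Almgren_blowup} is a homogeneous harmonic polynomial of degree $\mu$; hence so is $v_0$.

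\emph{Uniqueness via the Monneau functional.} Fix one subsequential limit $p_\mu := v_0$. A direct change of variables in $M_\mu$, together with the $\mu$-homogeneity of $p_\mu$, gives
\[
M_\mu(r_j; u, p_\mu) = \int_{(\partial B_1)^+} \bigl(v_{r_j}^{(\mu)} - p_\mu\bigr)^2 \, d\sigma \longrightarrow 0.
\]
By Corollary \ref{limMonn}, the full limit $L := \lim_{r \to 0^+} M_\mu(r; u, p_\mu)$ exists, hence $L = 0$. If $q_\mu$ were another subsequential blow-up along some $s_k \to 0^+$, the identical rescaling identity would yield $M_\mu(s_k; u, p_\mu) \to \int_{(\partial B_1)^+}(q_\mu - p_\mu)^2 \, d\sigma = L = 0$, which forces $q_\mu \equiv p_\mu$ on $(\partial B_1)^+$ and hence everywhere by homogeneity.

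The principal obstacle is the characterization step: we must ensure both that $v_0 \not\equiv 0$ and that the normalization constant $c_0$ is strictly positive, so that Theorem \ref{Almgren_blowup} can be invoked to identify $v_0/c_0$ as a homogeneous harmonic polynomial. Both rely on the nondegeneracy Lemma \ref{l_nondeg}; once this is secured, the uniqueness step is a clean consequence of the almost-monotonicity in Corollary \ref{limMonn}, which promotes convergence of $M_\mu(r; u, p_\mu)$ along one sequence to convergence of the full limit.
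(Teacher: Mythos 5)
Your proposal is correct and follows essentially the same route as the paper: both identify the homogeneous rescalings as positive multiples of the Almgren rescalings (via the two-sided bound $C_1 r^\mu \le [\varphi(r)]^{1/2}\le C_2 r^\mu$ coming from Corollary \ref{cor1}(a) and Lemma \ref{l_nondeg}), invoke Theorem \ref{Almgren_blowup} to characterize the limit as a homogeneous harmonic polynomial, and then obtain uniqueness by applying Corollary \ref{limMonn} with $p_\mu$ equal to one subsequential blow-up so that the full limit of the Monneau functional is forced to vanish.
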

\begin{proof} By Corollary \ref{cor1}(a) and Lemma \ref{l_nondeg}, there exist constants $C_1,\ C_2>0$ such that
$$
C_1 r^\mu\leq [\varphi(r)]^{1/2}\leq C_2 r^\mu, \qquad 0<r<1.
$$
From this it follows that any limit of the rescalings $v_r^{(\mu)}$ over any sequence $r_j\to 0^+$ is a positive multiple of the Almgrens rescalings $v_r$ as in \eqref{blowup}. By Theorem \ref{Almgren_blowup}, we know that the even reflection of  $v_0$ across $\{x_n=0\}$ is an homogeneous harmonic polynomial of degree $\mu$, and that the convergence is both in  $H^1(B_1^+)\text{ and in } C^1(B_1^+)$. At this point, we only need to show uniqueness. To this end, we apply Corollary \ref{limMonn} with $p_\mu=v_0$. We thus have
$$
M_{\mu}(0+,u, v_0)=\lim_{r_j\to 0^+}M_{\mu}(r_j,u,v_0)=\lim_{r_j\to 0^+}\int_{(\partial B_1)^+}(u_{r_j}^{(\mu)}-u_0)^2 \ d\sigma(x)=0,
$$
where the last equality follows from the first part of the proof. In particular, we have that
$$
M_{\mu}(r,u,v_0)=\int_{(\partial B_1)^+}(u_r^{(\mu)}-u_0)^2 \ d\sigma(x)\to 0
$$
as $r\to 0^+$, and not only over $r_j\to 0^+$. If $v_0'$ is a limit of $v_r^{(\mu)}$ over another sequence $r_j'\to 0^+$, we infer that
$$
\int_{(\partial B_1)^+}(u_0'-u_0)^2 \ d\sigma(x)=0.
$$
Hence, $u_0'=u_0$ and the proof is complete.
\end{proof}
The next step consists in showing the continuous dependance of the blow-ups. In what follows, we denote by $\mathcal{P}_\mu$, with $\mu\in\mathbb{N}$, the class of harmonic polynomials homogeneous of degree $\mu$ and even in $x_n$.

\begin{thrm}\label{contdep} Let $u$ be a solution to \eqref{statement_of_problem}, $\mu\in\mathbb{N}$ with $\mu\geq 2$, and $x_0\in \Sigma_\mu(u)$. Denote by $p_\mu^{x_0}$ the blow-up of $u$ at $x_0$ as in Theorem \ref{unique}, so that
$$
u(x)=p_\mu^{x_0}(x-x_0)+o(|x-x_0|^\mu).
$$
Then the mapping $x_0\mapsto p_\mu^{x_0}$ from $\Sigma_\mu(u)$ to $ \mathcal{P}_\mu$ is continuous. Moreover, for any compact set $K\subset \Sigma_\mu(u)\cap B_1$ there exists a modulus of continuity $\omega_\mu$, with $\omega_\mu(0+)=0$, such that
$$
\left|u(x)-p_{\mu}^{x_0}(x-x_0)\right|\leq \omega_\mu(|x-x_0|)|x-x_0|^\mu
$$
for any $x_0\in K.$
\end{thrm}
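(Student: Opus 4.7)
The overall plan is to apply the Monneau-type monotonicity of Corollary \ref{limMonn} centered at an arbitrary singular point rather than at the origin. The derivation of Theorem \ref{Monneau} extends verbatim to this setting, so for every $x_0 \in \Sigma_\mu(u)$ and every $p \in \mathcal{P}_\mu$, the quantity $M_\mu^{x_0}(r;u,p) + Cr$ is nondecreasing in $r$, where
$$
M_\mu^{x_0}(r;u,p) := \frac{1}{r^{n-1+2\mu}}\int_{(\partial B_r(x_0))^+}\bigl(u(x)-p(x-x_0)\bigr)^2 \,d\sigma(x),
$$
and $C$ can be chosen uniformly for $x_0$ in a compact subset of $\Sigma_\mu(u)\cap B_1$. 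A change of variables using the $\mu$-homogeneity of $p$ identifies $M_\mu^{x_0}(r;u,p)=\|v_r^{(\mu),x_0}-p\|_{L^2((\partial B_1)^+)}^2$, where $v_r^{(\mu),x_0}(y):=u(x_0+ry)/r^\mu$, so Theorem \ref{unique} yields $M_\mu^{x_0}(0+;u,p)=\|p_\mu^{x_0}-p\|_{L^2((\partial B_1)^+)}^2$ for every $p\in\mathcal{P}_\mu$.

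\textbf{Continuity.} For a sequence $x_0^{(k)}\to x_0$ in $\Sigma_\mu(u)$, I would apply the monotonicity centered at $x_0^{(k)}$ with test polynomial $p=p_\mu^{x_0}$ to obtain
$$
\|p_\mu^{x_0^{(k)}}-p_\mu^{x_0}\|_{L^2((\partial B_1)^+)}^2 = M_\mu^{x_0^{(k)}}(0+;u,p_\mu^{x_0}) \leq M_\mu^{x_0^{(k)}}(r;u,p_\mu^{x_0})+Cr
$$
for small $r>0$. Sending $k\to\infty$ first (using continuity of $u$), and then $r\to 0^+$ (using $M_\mu^{x_0}(0+;u,p_\mu^{x_0})=0$), makes the right-hand side vanish. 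Since $\mathcal{P}_\mu$ is finite-dimensional, $L^2$-convergence of polynomials gives convergence in any norm.

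\textbf{Uniform convergence of the Monneau functional on a compact $K\subset \Sigma_\mu(u)\cap B_1$.} The map $x_0\mapsto M_\mu^{x_0}(r;u,p_\mu^{x_0})$ is continuous on $K$ for each fixed $r$ thanks to the continuity of $u$ and of $x_0\mapsto p_\mu^{x_0}$ just established. If the convergence $M_\mu^{x_0}(r;u,p_\mu^{x_0})\to 0$ were not uniform in $x_0\in K$, one could extract $\epsilon>0$, $r_k\to 0^+$, and $x_0^{(k)}\to \bar x_0\in K$ with $M_\mu^{x_0^{(k)}}(r_k;u,p_\mu^{x_0^{(k)}})\geq \epsilon$; invoking the monotonicity at any fixed $r\geq r_k$ yields $M_\mu^{x_0^{(k)}}(r;u,p_\mu^{x_0^{(k)}})+Cr \geq \epsilon + Cr_k$, and passing first $k\to\infty$ by continuity, then $r\to 0^+$, contradicts $M_\mu^{\bar x_0}(0+;u,p_\mu^{\bar x_0})=0$.

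\textbf{From $L^2$ to $C^0$ and conclusion.} The uniform statement of the previous step reads $\|v_r^{(\mu),x_0}-p_\mu^{x_0}\|_{L^2((\partial B_1)^+)}\to 0$ uniformly in $x_0\in K$. The rescalings $v_r^{(\mu),x_0}$ solve the same problem \eqref{statement_of_problem} on a half-ball with coefficients $k_\pm r^{1+\mu(p-2)}\to 0$ (since $p\geq 2$), and are uniformly bounded in $L^\infty$ by Corollary \ref{cor2}; thus Theorem \ref{reg} gives a uniform $C^{1,\alpha}$-bound on $\{v_r^{(\mu),x_0}\}$ over $x_0\in K$ and small $r$. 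A standard Arzel\`a--Ascoli argument then promotes uniform $L^2$-convergence on $(\partial B_1)^+$ to uniform $C^0$-convergence, producing a modulus $\omega_\mu$ with
$$
\sup_{|y|=1,\,y_n\geq 0}\bigl|v_r^{(\mu),x_0}(y)-p_\mu^{x_0}(y)\bigr|\leq \omega_\mu(r),\qquad x_0\in K.
$$
Setting $r=|x-x_0|$, $y=(x-x_0)/r$ and invoking the $\mu$-homogeneity of $p_\mu^{x_0}$ yields the required pointwise estimate. The main obstacle is the uniform $C^{1,\alpha}$-bound on the rescalings: one must carefully track the shrinking of the rescaled Neumann coefficient (which is where $p\geq 2$ enters) and combine it with the uniform growth control of Corollary \ref{cor2}; once this is in place, everything else is a packaging of the Monneau monotonicity.
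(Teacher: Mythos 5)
Your proposal is correct and follows essentially the same route as the paper: Monneau monotonicity centered at nearby singular points gives continuity of $x_0\mapsto p_\mu^{x_0}$ and uniform decay of $M_\mu^{x_0}(r;u,p_\mu^{x_0})$ on compacta, after which the $L^2$-closeness of the homogeneous rescalings to $p_\mu^{x_0}$ is upgraded to $L^\infty$. The only cosmetic differences are that the paper obtains uniformity on $K$ by a finite cover of balls $B_{\delta_\varepsilon(x_0^i)}(x_0^i)$ rather than your Dini-type contradiction argument, and performs the $L^2$-to-$L^\infty$ upgrade by applying the $L^\infty$--$L^2$ estimate for the oblique derivative problem to $w_r=v^{(\mu)}_{r,x_1}-p_\mu^{x_1}$ rather than your uniform $C^{1,\alpha}$-bound-plus-interpolation route.
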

\begin{proof}
We begin by observing that $\mathcal{P}_\mu$ is a convex subset of the finite-dimensional vector space of all polynomials homogeneous of degree $\mu$, and therefore all norms are equivalent. We choose to endow it with the norm in $L^2((\partial B_1)^+)$. We begin by fixing $x_0\in\Sigma_\mu(u)$ and $\varepsilon >0$ sufficiently small. Then there is $r_\varepsilon=r_\varepsilon(x_0)>0$ such that
$$
M_\mu^{x_0}(r_\varepsilon;u,p_\mu^{x_0}):=\frac{1}{r_\varepsilon^{n-1+2\mu}}\int_{(\partial B_{r_\varepsilon})^+}\left(u(x+x_0)-p_\mu^{x_0}\right)^2 d\sigma(x)<\varepsilon.
$$
In turn, there exists $\delta_\varepsilon=\delta_\varepsilon(x_0)>0$ such that if $x_1\in\Sigma_\mu(u)\cap B_{\delta_\varepsilon}(x_0)$, then
\begin{equation}\label{intermediate}
M_\mu^{x_1}(r_\varepsilon;u,p_\mu^{x_0})=\frac{1}{r_\varepsilon^{n-1+2\mu}}\int_{(\partial B_{r_\varepsilon})^+}\left(u(x+x_1)-p_\mu^{x_0}\right)^2 d\sigma(x)<2\varepsilon.
\end{equation}
Corollary \ref{limMonn} yields that $M_\mu^{x_1}(r;u,p_\mu^{x_0})<3\varepsilon$, provided that $0<r<r_\varepsilon$ and $r_\varepsilon$ is small enough. Rescaling and passing to the limit as $r\to 0^+$ , we obtain
\begin{equation}\label{int2}
\int_{(\partial B_1)^+}\left(p_\mu^{x_1}-p_\mu^{x_0}\right)^2 d\sigma(x)=M_\mu^{x_1}(0+;u,p_\mu^{x_0})\leq 3\varepsilon
\end{equation}
and the first part of the theorem is proved. In order to establish the second part, we observe that, for $|x_1-x_0|<\delta_\varepsilon$ and $0<r<r_\varepsilon$, combining \eqref{intermediate} and \eqref{int2} we obtain
\begin{align*}
\|u(\cdot+x_1)-p_\mu^{x_1}\|_{L^2((\partial B_r)^+)}&\leq \|u(\cdot+x_1)-p_\mu^{x_0}\|_{L^2((\partial B_r)^+)}+\|p_\mu^{x_0}-p_\mu^{x_1}\|_{L^2((\partial B_r)^+)}\\
&\leq 2(3\varepsilon)^{1/2}r^{\frac{n-1}{2}+\mu}.\notag
\end{align*}
Integrating in $r$ we obtain
\begin{equation}\label{int3}
\|u(\cdot+x_1)-p_\mu^{x_1}\|_{L^2(( B_{r})^+)}\leq C\varepsilon^{1/2}r^{n/2+\mu},
\end{equation}
with $C=C(n,\mu)>0$. Letting
$$
v_{r,x_1}^{(\mu)}(x)=\frac{u(rx+x_1)}{r^\mu},
$$
we infer from \eqref{int3}
\begin{equation}\label{int4}
\|v_{r,x_1}^{(\mu)}(x)-p_\mu^{x_1}\|_{L^2( B_1^+)}\leq C\varepsilon^{1/2}.
\end{equation}
At this point we observe that the difference $w_r=v_{r,x_1}^{(\mu)}(x)-p_\mu^{x_1}$ is a weak solution to
\begin{equation*}
\begin{cases}
\Delta w_r=0&\qquad \text{ in } B_1^+,\\
\mathlarger{\frac{\partial w_r}{\partial \nu}}=r\left[-k_+\left(\left(v_{r,x_1}^{(\mu)}\right)^+\right)^{p-1}+k_-\left(\left(v_{r,x_1}^{(\mu)}\right)^-\right)^{p-1}\right] &\qquad\text{  on } \Gamma.
\end{cases}
\end{equation*}
By the $L^\infty-L^2$ interior estimates (see, for instance \cite[Theorem 5.36]{L}), there exists a positive constant $C=C(n, k_+, k_-)$ such that, for some $q>n-1$,
\begin{equation}\label{final1}
\|v_{r,x_1}^{(\mu)}(x)-p_\mu^{x_1}\|_{L^\infty(( B_{1/2})^+)}\leq C\left(\|v_{r,x_1}^{(\mu)}(x)-p_\mu^{x_1}\left\|_{L^2(( B_{1})^+)}+r\||v_{r,x_1}^{(\mu)}|^{p-1}\right\|_{L^{q}(\Gamma)}\right).
\end{equation}
To estimate the right-hand side in \eqref{final1}, we recall that $|v_{r,x_1}^{(\mu)}|\leq C$ by Corollary \ref{cor2}, and thus
\begin{equation}\label{final2}
r\||v_{r,x_1}^{(\mu)}|^{p-1}\|_{L^{q}(\Gamma)}\leq C r^{1+\frac{n-1}{q}}.
\end{equation}
Combining \eqref{final1} with \eqref{int4} and \eqref{final2}, we obtain
\begin{equation}\label{final3}
\|v_{r,x_1}^{(\mu)}(x)-p_\mu^{x_1}\|_{L^\infty(( B_{1/2})^+)}\leq C_\varepsilon
\end{equation}
for $0<r<r_\varepsilon$ sufficiently small, and $C_\varepsilon\to 0$ as $\varepsilon\to 0$. To conclude, we cover the compact set $K\subset \Sigma_\mu(u)\cap B_1$ with a finite number of balls $B_{\delta_\varepsilon(x_0^i)}(x_0^i)$ for some choice of $x_0^i\in K$, $i=1,\dots, N$. Hence, for $r<r_\varepsilon^K:=\min\{r_\varepsilon(x_0^i)|i=1,\dots, N\}$, we have that \eqref{final3} holds for all $x_1\in K$. The desired conclusion readily follows.
\end{proof}
\begin{proof}[Proof of Theorem \ref{structure}] The proof of the structure of $\Sigma_\mu^d$ is centered on Corollary \ref{F-sigma}, Theorem \ref{contdep}, Whitney's extension theorem, and the implicit function theorem. Since the arguments are essentially identical to the ones in the proof of Theorem 1.3.8 in \cite{GP}, we omit the details and refer the interested reader to that source.
\end{proof}


\begin{thebibliography}{9999}
 \bibitem{AC1} Athanasopoulos, I. ;  Caffarelli, L. A.  \emph{Optimal regularity of lower dimensional obstacle problems.}
 Zap. Nauchn. Sem. S.-Peterburg. Otdel. Mat. Inst. Steklov. (POMI)  310  (2004),  Kraev. Zadachi Mat. Fiz. i Smezh. Vopr. Teor. Funkts. 35 [34], 49--66, 226; translation in  J. Math. Sci. (N. Y.)  132  (2006),  no. 3, 274--284.

 \bibitem{AC2} Athanasopoulos, I.; Caffarelli, L. A. \emph{Continuity of the temperature in boundary heat control problems.} Adv. Math. 224 (2010), no. 1, 293--315.

\bibitem{ACS} Athanasopoulos, I. ;  Caffarelli, L. A. ;  Salsa, S.  \emph{The structure of the free boundary for lower dimensional obstacle
 problems.} Amer. J. Math.  130  (2008),  no. 2, 485--498.

\bibitem{ALP}  Allen, M.; Lindgren, E. ; Petrosyan, A.\emph{ The two-phase fractional obstacle problem.} SIAM J. Math. Anal. 47 (2015), no. 3, 1879--1905.

\bibitem{AP}  Allen, M. ; Petrosyan, A. \emph{A two-phase problem with a lower-dimensional free boundary.} Interfaces Free Bound. 14 (2012), no. 3, 307--342.

\bibitem{CSS}  Caffarelli, L. A. ; Salsa, S. ; Silvestre, L. \emph{Regularity estimates for the solution and the free boundary of the obstacle problem for the fractional Laplacian. }Invent. Math. 171 (2008), no. 2, 425--461.

    \bibitem{CS}  Caffarelli, L. ; Silvestre, L. \emph{An extension problem related to the fractional Laplacian.} Comm. Partial Differential Equations 32 (2007), no. 7-9, 1245--1260.
\bibitem{CaSi} Cannarsa, P.; Sinestrari, C. \emph{Semiconcave functions, Hamilton-Jacobi equations, and optimal control}. Progress in Nonlinear Differential Equations and their Applications, 58. Birkh\"auser Boston, Inc., Boston, MA, 2004. xiv+304 pp. ISBN: 0-8176-4084-3
    
    \bibitem{DK} Danielli, D; Krummel, B. \emph{Existence and regularity results for the penalized thin obstacle problem with variable coefficients}. Submitted for publication. arXiv:2005.05524

    \bibitem{DS} Danielli D. ; Salsa, S.  \emph{Obstacle problems involving the fractional Laplacian.} To appear in Recent developments in the Nonlocal Theory (T. Kuusi and G. Palatucci, Eds.), Book Series on Measure Theory, De Gruyter, Berlin, 2017.

\bibitem{DSS} De Silva, D. ; Savin, O.   \emph{Boundary Harnack estimates in slit domains and applications to thin free boundary problems.} Rev. Mat. Iberoam. 32 (2016), no. 3, 891--912.

\bibitem{DL} Duvaut, G. ;  Lions, J.-L.  \emph{Inequalities in mechanics and physics.} Translated from the French by C. W. John.
Grundlehren der Mathematischen Wissenschaften, 219.
Springer-Verlag, Berlin-New York,  1976. xvi+397 pp. ISBN: 3-540-07327-2

\bibitem{FF}  Fall, M. M; Felli, V. \emph{Unique continuation property and local asymptotics of solutions to fractional elliptic equations.} Comm. Partial Differential Equations 39 (2014), no. 2, 354--397.

\bibitem{GP} Garofalo, N. ; Petrosyan, A. \emph{Some new monotonicity formulas and the singular set in the lower dimensional obstacle problem.} Invent. Math. 177 (2009), no. 2, 415--461.



\bibitem{J}  Jost, J. \emph{Partial differential equations}. Second edition. Graduate Texts in Mathematics, 214. Springer, New York, 2007. xiv+356 pp. ISBN: 978-0-387-49318-3; 0-387-49318-2 35-01

 \bibitem{KPS}    Koch, H. ; Petrosyan, A. ; Shi, W. \emph{Higher regularity of the free boundary in the elliptic Signorini problem.} Nonlinear Anal. 126 (2015), 3--44.

\bibitem{L}  Lieberman, G. M. \emph{Oblique derivative problems for elliptic equations.} World Scientific Publishing Co. Pte. Ltd., Hackensack, NJ, 2013. xvi+509 pp. ISBN: 978-981-4452-32-8

\bibitem{PSU}    Petrosyan, A. ; Shahgholian, H. ; Uraltseva, N. \emph{Regularity of free boundaries in obstacle-type problems.} Graduate Studies in Mathematics, 136. American Mathematical Society, Providence, RI, 2012. x+221 pp. ISBN: 978-0-8218-8794-3
  \end{thebibliography}
\end{document}